
\documentclass{amsart}

\input{arxiv.input}

\usepackage{graphicx}
\usepackage{amssymb,  amsmath}
\usepackage[vcentermath]{youngtab}
\usepackage{comment}
\usepackage{epstopdf,hyperref}


\newcommand{\refer}[1]{{\tag{See \eqref{#1}, p.~\pageref{#1}}}}


\usepackage{color}
\definecolor{lightblue}{rgb}{0.6,0.6,0.9}
\definecolor{vlightblue}{rgb}{0.80,0.80,1}
\definecolor{vlightred}{rgb}{1,0.95,0.95}

\newcommand{\note}[1]{}

\newcommand{\mysize}[1]{\text{{\small #1}}}

\newcommand{\backblue}{\colorbox{vlightblue}{\mysize{\phantom{2}}}}
\newcommand{\backred}{\colorbox{vlightred}{\mysize{\phantom{2}}}}

\newcommand{\complementary}[2]{S_{#1}{(#2)}}


\newcommand{\One}{\bar{1}}
\newcommand{\Zero}{\bar{0}}

\newcommand{\shift}{\delta}

\newcommand{\calli}[1]{{\mathcal{#1}}}
\newcommand{\AAA}{\mathcal{A}}
\newcommand{\BBB}{\mathcal{B}}
\newcommand{\DDD}{\mathcal{D}}
\newcommand{\QQQ}{\mathcal{Q}}
\newcommand{\XXX}{\mathcal{X}}
\newcommand{\YYY}{\mathcal{Y}}

\renewcommand{\neg}[1]{{\underline{#1}}}

\newcommand{\negBBB}{{\neg{\BBB}}}

\newcommand{\calplus}{\oplus}
\newcommand{\caltimes}{\otimes}

\newcommand{\calxxp}[1]{{X^{#1}}}

\newcommand{\calexp}[1]{{e^{#1}}}


\newcommand{\ud}[1]{{\text{d}{#1}}}
\newcommand{\primezeta}{\overset{\circ}{\zeta}}
\newcommand{\floor}[1]{{\lfloor #1 \rfloor}}


\DeclareMathOperator{\sortO}{sort}
\newcommand{\sort}[1]{\sortO\!{\left({#1}\right)}}
\newcommand{\symp}[2]{\mathfrak{p}_{#1}{\left(#2\right)}}
\newcommand{\symh}[2]{\mathfrak{h}_{#1}{\left(#2\right)}}
\newcommand{\syms}[2]{\mathfrak{s}_{#1}{\left(#2\right)}}
\newcommand{\symm}[2]{\mathfrak{m}_{#1}{\left(#2\right)}}
\newcommand{\syme}[2]{\mathfrak{e}_{#1}{\left(#2\right)}}
\newcommand{\sympsymb}[1]{\mathfrak{p}_{#1}}
\newcommand{\symhsymb}[1]{\mathfrak{h}_{#1}}
\newcommand{\symssymb}[1]{\mathfrak{s}_{#1}}
\newcommand{\symmsymb}[1]{\mathfrak{m}_{#1}}

\newcommand{\multipart}[2]{\langle{#1}^{#2}\rangle}
\newcommand{\EulerGamma}{\gamma}
\newcommand{\StieltjesGamma}[1]{\gamma_{#1}}

\newtheorem{thm}{Theorem}
\newtheorem{lemma}[thm]{Lemma}
\newtheorem{prop}[thm]{Proposition}
\newtheorem{conj}[thm]{Conjecture}


\newcommand{\cumulstieltjes}[1]{{g_{#1}^c}}
\newcommand{\coeff}[2]{{F^{(#2)}_{#1}}}
\newcommand{\VVV}[2]{{V^{#1}_{#2}}}

\DeclareMathOperator{\GL}{GL}


\title[Combinatorics of Lower Terms for Moments of Zeta]{Combinatorics of Lower Order Terms in the Moment Conjectures for the Riemann Zeta Function}

\author{Paul-Olivier Dehaye}

\date{\today}                                           

\begin{document}
\input{arxiv_body.input}

\begin{abstract}
Conrey, Farmer, Keating, Rubinstein and Snaith have given a recipe that conjecturally produces, among others, the full moment polynomial for the Riemann zeta function. The leading term of this polynomial is given as a product of a factor explained by arithmetic and a factor explained by combinatorics (or, alternatively, random matrices). We explain how the lower order terms arise, and clarify the dependency of each factor  on the exponent $k$ that is  considered. 

We use extensively the theory of symmetric functions and representations of symmetric groups, ideas of Lascoux on manipulations of alphabets, and a key lemma, due in a basic version to Bump and Gamburd. Our main result ends up involving dimensions of skew partitions, as studied by Olshanski, Regev, Vershik, Ivanov and others. 

In this article, we also lay the groundwork for later unification of the combinatorial computations for lower order terms in the moments conjectures across families of $L$-functions of unitary, orthogonal and symplectic types.
\end{abstract}

\maketitle
\section{Introduction}
\subsection{History}
This paper concerns the Riemann zeta function and its moments.

In 2000, Keating and Snaith \cite{KSLfunctions,KSzeta} computed moments of characteristic polynomials of matrices in the unitary group $U(N)$ and suggested the following conjecture.
\begin{conj}[(see \cite{KSzeta})]
\label{conj.KS}
For any positive integer $k$,
\begin{equation}
\label{eqn.moment}
 \int_{0}^{T} \left| \zeta\left(\frac{1}{2} + \mathfrak{i} t\right) \right|^{2k} \ud t \sim \frac{a_k g_k}{(k^2)!}T (\log T)^{k^2},
\end{equation}
with \emph{arithmetic factor}
\begin{equation}
a_k := \prod_p \left(1-\frac{1}{p}\right)^{k^2}  {}_2F_1\left(k,k;1;\frac{1}{p}\right),
\label{eqn.ak}
\end{equation}
and \emph{combinatorial factor}\footnote{\label{footnote.rmt}This is usually referred as the \emph{random matrix theory factor}. We feel however that the adjective ``combinatorial'' is more appropriate, particularly in light of the results presented here.}
\begin{equation}
\label{eqn.gk}
g_k := \lim_{N \rightarrow \infty } \frac{\Gamma(k^2+1)}{N^{k^2}} \int_{U(N)} \left|\Lambda_g(1)\right|^{2k} \ud g =  (k^2)! \prod_{j=0}^{k-1} \frac{j!}{(j+k)!},
\end{equation}
where $\Lambda_g$ is the characteristic polynomial of a matrix $g$ distributed under Haar measure. 
\end{conj}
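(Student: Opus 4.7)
The conjectured asymptotic factors into two ingredients: the closed-form evaluation of the random matrix factor $g_k$, which is genuinely provable, and the asymptotic for the zeta moment itself, which remains open for $k\ge 3$. The plan is to dispatch the first rigorously and then sketch the Conrey--Farmer--Keating--Rubinstein--Snaith (CFKRS) recipe that heuristically yields the full asymptotic.

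For $g_k$, I would apply Weyl's integration formula on $U(N)$ to rewrite $\int_{U(N)} |\Lambda_g(1)|^{2k}\,\ud{g}$ as a circular Selberg-type integral with integrand $\prod_j |1-e^{\complexi\theta_j}|^{2k}$ against the squared Vandermonde. Morris's evaluation then gives the closed form $\prod_{j=0}^{N-1} j!(j+2k)!/(j+k)!^2$. A Stirling analysis, separating the indices $j$ close to $N$ (which collectively contribute $N^{k^2}$) from bounded $j$ (which give $\prod_{j=0}^{k-1} j!/(j+k)!$), yields the limit in \eqref{eqn.gk} after multiplying by $\Gamma(k^2+1)/N^{k^2}$ and letting $N\to\infty$.

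For the zeta moment, I would begin from the shifted moment
\begin{equation*}
\int_0^T \prod_{j=1}^k \zeta\!\left(\tfrac{1}{2}+\alpha_j+\complexi t\right)\zeta\!\left(\tfrac{1}{2}+\beta_j-\complexi t\right)\,\ud{t},
\end{equation*}
apply the approximate functional equation to each factor, heuristically discard off-diagonal contributions, and symmetrize over the $2^{2k}$ subsets of shifts swapped by the functional equations. The arithmetic part of the resulting main term assembles into an Euler product that specializes to $a_k$ as the shifts tend to $0$; the remaining combinatorial residue reproduces the RMT integrand and contributes $g_k/(k^2)!$ after extracting the leading power of $\log T$.

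The genuine obstacle, and the reason the conjecture is open for $k\ge 3$, lies in controlling the off-diagonal terms of the approximate functional equation. For $k=1,2$ these reduce to binary divisor correlations handled by Hardy--Littlewood and Ingham respectively; for higher $k$ one needs level-of-distribution estimates for $d_k(n)\,d_k(n+h)$ uniform in $h$, which are not currently available. The combinatorial and arithmetic components, which the body of this paper refines to lower order, are clean and are not where the analytic difficulty resides.
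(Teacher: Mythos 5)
This statement is a conjecture (due to Keating and Snaith), not a theorem, and the paper does not prove it; your recognition that the zeta-moment asymptotic is genuinely open for $k\ge 3$ while only the closed form for $g_k$ is provable is the correct framing. Your heuristic sketch of the CFKRS recipe (approximate functional equation, $\chi$-selection, diagonal selection) is consistent with the route the paper lays out in Section~\ref{sec.recipe}, and your identification of the obstruction as uniform control of $d_k(n)\,d_k(n+h)$ correlations is the standard and correct diagnosis.

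Where your account diverges from the paper is the derivation of $g_k$. You use the original Keating--Snaith route: Weyl integration on $U(N)$, evaluation of the resulting Selberg/Morris integral as $\prod_{j=0}^{N-1} j!(j+2k)!/(j+k)!^2$, and a Stirling analysis of the $N\to\infty$ limit. The paper deliberately avoids this and instead, in Proposition~\ref{prop.leading}, identifies $g_k$ with $\dim(\phi,(k^k))$, the number of standard Young tableaux of shape $k\times k$, evaluated by the hook-length formula: the $k\times k$ hook product is $\prod_{i,j=0}^{k-1}(i+j+1)$, giving $g_k=(k^2)!/\prod_{i,j=0}^{k-1}(i+j+1)$, which equals the expression in~\eqref{eqn.gk}. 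This is the Bump--Gamburd combinatorial viewpoint, which is the entire thrust of the paper: it removes the random-matrix integral and the extraneous limit in $N$, makes the integrality of $g_k$ manifest, and is what allows the paper's main theorem to express all lower-order coefficients in terms of skew-tableau dimensions $\dim(\lambda,\complementary{k}{\kappa})$. Your approach buys familiarity and a direct link to Haar measure on $U(N)$; the paper's approach buys a uniform combinatorial framework that extends to all coefficients $c_N(k)$, not just the leading one.
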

The normalization by $(k^2)!$ is there to guarantee that $g_k \in \mathbb{N}$. 

Their conjecture followed a few unconditional results early in the $20^\text{th}$ century (Hardy and Littlewood for $k=1$, Ingham for $k=2$) and a few conjectures in the late $20^\text{th}$ century (folklore for the exponent of $\log T$ and the general shape of the coefficient, Conrey and Ghosh \cite{MR762188} for more conjectural information about this coefficient, Conrey and Ghosh  and Conrey and Gonek \cite{MR1639551,MR1828303} for the conjectured values of $g_3$ and $g_4$). While agreeing with all the previously known results or conjectures, Keating and Snaith's contribution was to identify the sequence $g_k$ that should work: $(g_i) = (1, 1, 2, 42, 24024, 701149020, 1671643033734960,\cdots)$. This is sequence A039622 in \cite{A039622}.

Soundararajan  has shown that, assuming the Riemann Hypothesis, the exponent of $\log T$ in Equation~\eqref{eqn.moment}  is correct, at least up to an $\epsilon$ (see \cite{Sound} for the precise statement). 

The work of Keating and Snaith attracted significant interest, and led to several simplifications or new interpretations.

In 2002, the collaboration of Conrey, Farmer, Keating, Rubinstein and Snaith  \cite{CFKRS1,CFKRS2} gave a \emph{recipe} leading to a sharpening of Conjecture~(\ref{eqn.moment}) for integral $k$. This is based on earlier unconditional work of Motohashi \cite{MR2271611} sharpening the theorem of Ingham for $k=2$. Their final conjecture takes the form
\begin{conj}[(see \cite{CFKRS1}, \cite{CFKRS2})]
\label{conj.pol}
Let $k \in \mathbb{N}$. There exists a sequence $P_k$ of polynomials, of degree $k^2$ and with leading coefficient equal to $\frac{a_kg_k}{(k^2)!}$, given either implicitly by a $2k$-fold contour integral in \cite{CFKRS1} or explicitly by Theorem~1.2 in \cite{CFKRS2}, such that, for all $\epsilon >0$,
\begin{equation}
\int_0^T \left|\zeta\left(\frac{1}{2} + \mathfrak{i}t\right)\right|^{2k} \ud t = \int_0^T P_k\left(\log \frac{t}{2\pi}\right) \ud t + O\left(T^{1/2+\epsilon}\right).
\label{eqn.momentpol}
\end{equation}
\end{conj}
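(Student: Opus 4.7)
Since Conjecture~\ref{conj.pol} is presently out of reach for $k\ge 3$ --- the only unconditional moment results being those of Hardy--Littlewood ($k=1$) and Ingham/Motohashi ($k=2$) --- my sketch is really a derivation of the polynomial $P_k$ in the spirit of the CFKRS recipe, together with a comment on how one could hope to control the error. First, I would apply the approximate functional equation to $\zeta(1/2+\mathfrak{i}t)$, expressing it as a sum of a short Dirichlet polynomial and its ``dual'' through the $\Gamma$-factor. Expanding $|\zeta|^{2k}$ then produces $\binom{2k}{k}$ arrangements of products of $k$ shifted Dirichlet series against $k$ dual Dirichlet series, each with small complex shifts. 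The recipe instructs one to discard every off-diagonal sum and to retain only those terms in which each series pairs with a dual partner on the diagonal; the conjectural cancellation of the off-diagonal contributions in the $t$-integral is precisely what is believed to produce an error of size $T^{1/2+\epsilon}$.

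Second, after symmetrization in the shift parameters, I would repackage the surviving contributions as the residue of a $2k$-fold contour integral whose integrand is a ratio of shifted $\zeta$-values times an Euler product. Expanding the Euler product around the central point in the shifts yields the arithmetic factor $a_k$ together with correction terms obtained by differentiating in the shifts; expanding the ratios of $\zeta$-values on the pole diagonal produces a Laurent development whose $2k$-fold residue is manifestly a polynomial in $\log(t/2\pi)$ of degree $k^2$, whose leading coefficient is (by design) $a_kg_k/(k^2)!$. This is essentially the polynomial $P_k$, but the bookkeeping required to track which residue terms contribute to each power of the logarithm, and with what multiplicity, is the genuinely combinatorial heart of the computation.

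Third, to pull $P_k$ into the structured form promised by the abstract, I would translate the residue into the language of symmetric functions. Via Lascoux's alphabet manipulations, the shifted $\zeta$-ratio can be rewritten as a specialization on a virtual alphabet, and the Bump--Gamburd lemma then identifies the combinatorial piece with a sum of specialized Schur functions indexed by partitions; the lower order terms naturally correspond to skew shapes and their dimensions in the sense of Olshanski--Regev--Vershik--Ivanov. The main obstacle, of course, is not this derivation but the rigorous verification of the $O(T^{1/2+\epsilon})$ bound; this requires a genuine off-diagonal analysis, and for $k\ge 3$ no such argument is known. Even at $k=2$ one must invoke the full spectral theory of $\GL_2$ à la Motohashi, which does not obviously generalize, so in a serious attempt this last step is where nearly all of the work would have to go.
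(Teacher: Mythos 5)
This is a conjecture, not a theorem, and neither you nor the paper proves it: the paper records Conjecture~\ref{conj.pol} as background, as the object whose internal structure it then sets out to elucidate. You correctly flag this, and your first paragraph is a fair gloss of the three deliberately unjustifiable steps the paper lays out in Section~\ref{sec.recipe} (approximate functional equation, $\chi$-selection, diagonal selection), together with the accurate remark that the $O(T^{1/2+\epsilon})$ error is genuinely out of reach for $k\ge 3$ and already requires Motohashi-level spectral input at $k=2$.

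Where your sketch parts ways with the paper is in its second and third paragraphs. You propose to package the surviving diagonal contributions as the residue of a $2k$-fold contour integral, in the style of CFKRS, and only afterwards to translate that residue into symmetric-function language. The paper's explicit goal (see the Motivation) is to sidestep the $2k$-fold integral entirely: once the diagonal selection produces the form \eqref{eqn.after3}, the author expands $\tilde f(\AAA;\BBB)$ directly in products of Schur polynomials (Proposition~\ref{prop.expansion}) and then invokes the Laplace-expansion Lemma~\ref{lemma.1} (Bump--Gamburd) to resum over the set partitions $\AAA\cup\BBB=\DDD$, with Lemma~\ref{lemma.2} picking out the $\mu=\phi$ term via the complementarity criterion. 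The residue computation never appears. So your third paragraph is not a continuation of your second --- it is an alternative to it --- and the paper takes that alternative fork. This is not a cosmetic difference: avoiding the contour integral is exactly what yields the Giambelli determinants of size $\floor{\sqrt{N}}$ in place of CFKRS2's determinants of size roughly $N/2$, and it is what allows the dimension function $\dim(\lambda,\complementary{k}{\kappa})$ to be isolated cleanly in Theorem~\ref{thm.main}.
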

The information on the leading term shows that Conjecture~\ref{conj.pol} implies Conjecture~\ref{conj.KS}. 
While the leading term is simply given as a product of the $a_k$ and the $g_k$ factor, it became a major issue to understand the interaction of the arithmetic and combinatorial terms for the lower order coefficients\footnote{As far as is currently known, there is no expression for these lower order coefficients directly in terms of random matrices, thereby justifying the footnote on page~\pageref{footnote.rmt}.}.

Diaconu, Goldfeld and Hoffstein \cite{Diaconu} immediately reformulated these conjectures (or their leading order) as consequences of further conjectures in the then-emerging theory of Multiple Dirichlet Series. This is in some ways even closer to the works of Motohashi.

In 2003, Beineke and Bump \cite{BumpBeineke1,BumpBeineke2}  remarked that the CFKRS recipe could potentially be explained as constant terms of Einsenstein series on $\GL(2k)$, where a distinguished role for the subgroup $\GL(k)\times\GL(k)$ is to be expected.

In 2004,  Bump and Gamburd \cite{BumpGamburd} simplified Keating and Snaith's computations on  moments of random matrices. This work stressed the combinatorial nature of $g_k$, which has no arithmetical content.

\subsection{Motivation}
Conjecture~\ref{conj.pol} is the most precise conjecture on moments of the Riemann zeta function  that has been formulated so far (and is part of a large set of other similar conjectures on families of $L$-functions). The polynomials $P_k$ are thus very interesting, but they need to be known very precisely to be useful. Indeed, they tend to have small leading coefficient and comparatively large middle coefficients, which is problematic for testing:  for actual numerical data (\textit{i.e.}~large but finite $T$ \cite{Hiary}), it is harder to tell which term would contribute most to $P_k(\log T)$. 

The heuristics leading to Conjecture~\ref{conj.pol} are based on approximations of zeta by Dirichlet polynomials, followed by complicated combinatorial manipulations. This is similar to the earlier conjectures of Conrey and his collaborators. Unfortunately, the end result in \cite{CFKRS1} for $P_k$ is extremely implicit, given as the residue of a $2k$-fold  integral, which can be  either obtained symbolically or integrated numerically.  In either case, limitations creep in that drastically bound the size of achievable $k$.  This formula (a $2k$-fold integral) makes it also very difficult to think of the analytic continuation in $k$ of the $P_k$, which should exist (these questions are tied to bounds for the Riemann zeta function on the critical line, and eventually to the Lindel\"of Hypothesis). The end result in \cite{CFKRS2} answers some of these questions, but the computations are even more complicated and in the end useful numerically but not so illuminating. In particular, a polynomiality result for components of the $P_k$ is proved in their Theorem 1.3.  These polynomial components,  which they call $N_k(\alpha;\beta)$,  are then recovered using either interpolation or determinantal formulas.  Concern for these polynomial components end up occupying a significant part of \cite{CFKRS2}, but in fact we would argue that the $N_k(\alpha;\beta)$ are not satisfactorily explained by their paper. As evidence we will point to a very recent\footnote{In fact, a first version  of this paper (with minor differences except in the introduction and Section~\ref{sec.uniform})  preceded their preprint on the arXiv.} preprint \cite{Goulden} which aims to extend the treatment of \cite{CFKRS2} to symplectic and orthogonal families. While the arithmetic side requires little new ideas, the study of what they now call $N_\lambda(k)$ (in their Theorem~1.1) and again its polynomiality is the hardest part and requires to perform complicated calculations anew. 

Starting with the recipe of \cite{CFKRS1}, we wish to provide an alternative to the computations of \cite{CFKRS2}, avoiding entirely the use of multiple integrals. 

\subsection{Statement of the main result}
In accordance with CFKRS, we define functions $c_r(k)$ at integers $k,r\ge 0$ such that 
\begin{equation}
\label{eqn.crk}
P_k(x) = c_0(k) x^{k^2} + c_1(k) x^{k^2-1} +\cdots + c_{k^2}(k).
\end{equation}

Before we give a formal statement of our main results, we qualitatively describe them. The narrow view of our results is that the polynomial components (or in fact linear combinations of the $N_k(\alpha;\beta)$) have a combinatorial interpretation counting standard tableaux of skew shape or alternatively paths in the Young lattice. This interpretation, which is very natural to a combinatorialist, immediately leads to the polynomiality results thanks to the theory of Frobenius-Schur functions, and gives numerous efficient determinantal formulas to compute them. This interpretation will even give the analytic continuations in $k$. The broader view is that this interpretation will turn out to work in the symplectic and orthogonal cases (in the upcoming paper \emph{Combinatorics of the lower order terms in the moment conjectures for the symplectic and orthogonal families of $L$-functions}), thereby unifying our understanding of the computations needed for lower order terms across the three types of families.

We comment further on the significance of these results after stating them formally.
\begin{thm} The coefficient $c_N(k)$ satisfies the equation
\label{thm.main}
\begin{equation}
c_N(k) = \frac{1}{(k^2-N)!} \sum_{\substack{\kappa,\lambda \\ |\kappa|+|\lambda|=N}} d_{\kappa\lambda} \dim(\lambda, \complementary{k}{\kappa}),
\label{eqn.main}
\end{equation}
with the sum taken over pairs of partitions $\kappa, \lambda$ of combined weight $N$, and
\begin{itemize}
\item   $\dim(\mu,\nu)$, a nonnegative integer,  the number of standard tableaux of skew shape $\mu\setminus \nu$ (defined in Section~\ref{sec.dimension});
\item  $\complementary{k}{\kappa}$ a partition of size $k^2-|\kappa|$ (defined in Section~\ref{sec.bumpgamburd});
\item $d_{\kappa\lambda}= d_{\kappa\lambda}(k)$ a coefficient of arithmetic significance.
\end{itemize}
  
 To be more explicit, the $d_{\kappa\lambda}$ are given in  Proposition~\ref{prop.expansion}, Section~\ref{sec.alternate} as simple algebraic expressions (with rational coefficients depending on character tables of symmetric groups of size $N$, see the proof of Proposition \ref{prop.expansion}) in the $W_{\mu\nu}$, which are defined in Equation~\eqref{eqn.Wmunu} as 
\begin{equation*}
W_{\mu\nu} = \sum_{r \ge 1} \VVV{r}{\mu\nu}   \coeff{r}{|\mu|+|\nu|},
\end{equation*}
with the $\VVV{r}{\mu\nu}$ easily computable polynomials in $k$ (see Equation~\eqref{eqn.Vrmunu}), of degree bounded by $2r$, and $\displaystyle{\coeff{r}{n}}$ the $n^\text{th}$ Taylor coefficient of the prime zeta function at $r$ (see Section~\ref{sec.primezeta}). Moreover, the algebraic expression for the $d_{\kappa\lambda}$ only involves $W_{\mu\nu}$ of smaller total weight in $\mu$ and $\nu$ than $N$. 

\end{thm}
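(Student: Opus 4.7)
The plan is to bypass the $2k$-fold contour integral of \cite{CFKRS1} by working directly with power sums and Schur functions in two shifted alphabets $\alpha=(\alpha_1,\dots,\alpha_k)$ and $\beta=(\beta_1,\dots,\beta_k)$. The CFKRS integrand decomposes as the product of a purely arithmetic factor $\AAA(\alpha;\beta)$, analytic at the origin, a pole-bearing combinatorial factor built from the zeta shifts $\prod_{i,j}\zeta(1+\alpha_i-\beta_j)$, and a Vandermonde-type kernel producing the logarithmic powers. Extracting $c_N(k)$ amounts, after appropriate normalization, to isolating the total order $k^2-N$ homogeneous contribution in the shifts and evaluating at $\alpha=\beta=0$.

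For the arithmetic factor, the first step is to take its logarithm and Taylor-expand at $0$. The Euler product of $\zeta$ identifies the relevant coefficients as the Taylor coefficients $\coeff{r}{n}$ of the prime zeta function at integer $r\ge 2$ (Section~\ref{sec.primezeta}). Re-exponentiating and collecting in a power-sum basis labelled by pairs of partitions $(\mu,\nu)$ produces the polynomials $\VVV{r}{\mu\nu}$ of degree at most $2r$ in $k$ and the sums $W_{\mu\nu}$ of Equation~\eqref{eqn.Wmunu}. A change of basis from power sums to Schur functions, controlled by the irreducible character values of $\mathfrak{S}_{|\mu|+|\nu|}$, then furnishes the coefficients $d_{\kappa\lambda}(k)$ of Proposition~\ref{prop.expansion}; the triangular structure of the resulting expression ensures that each $d_{\kappa\lambda}$ at weight $|\kappa|+|\lambda|=N$ only calls upon the $W_{\mu\nu}$ of total weight strictly smaller than $N$, as asserted.

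For the combinatorial factor, the central tool is the Bump--Gamburd identity, reformulated in Lascoux's language of virtual alphabets. This identity replaces the residue-extraction on the zeta/Vandermonde piece by a finite sum indexed by pairs of partitions $(\kappa,\lambda)$, where $\kappa$ is confined to the $k\times k$ square. After this replacement, the remaining symmetric-function evaluation is of the form $s_\lambda$ at a virtual alphabet associated with $\complementary{k}{\kappa}$, and the theory of Frobenius--Schur functions (in the line of Olshanski, Regev, Vershik, and Ivanov) identifies such an evaluation with $\dim(\lambda,\complementary{k}{\kappa})$, the count of standard tableaux of the skew shape (Section~\ref{sec.dimension}), up to the normalization $1/(k^2-N)!$ that arises from the top power sum $\sympsymb{1}^{k^2-N}$. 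The weight constraint $|\kappa|+|\lambda|=N$ then falls out from matching total homogeneous degrees in the shifts on the two sides.

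Assembling the two expansions term by term yields Equation~\eqref{eqn.main}. The main obstacle I expect is the careful bookkeeping required so that the residue-extraction at the origin commutes with the symmetric-function expansions in an orderly way; in particular, the Bump--Gamburd identity must be transplanted from its natural habitat of ratios of characteristic polynomials of unitary matrices into the CFKRS zeta-shift setting, where the simple poles of $\zeta(1+z)$ and the Vandermonde zeros conspire to leave behind exactly the finite sum of Equation~\eqref{eqn.main}. A secondary difficulty is the coherent use of Lascoux's virtual-alphabet formalism so that the definition of $\complementary{k}{\kappa}$ in Section~\ref{sec.bumpgamburd} matches the Schur-evaluation identity seamlessly; this is the technical heart of combining Lascoux with Bump--Gamburd in our setting.
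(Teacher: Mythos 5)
The arithmetic half of your sketch---logarithm of the diagonal Euler factor, Taylor expansion at zero yielding the prime-zeta coefficients $\coeff{r}{n}$, re-exponentiation, and the power-sum-to-Schur transition controlled by characters of $\mathcal{S}_N$ to produce $W_{\mu\nu}$ and $d_{\kappa\lambda}$---is essentially the content of Proposition~\ref{prop.expansion} and matches the paper's route. The gap is in the combinatorial half. You attribute the appearance of $\dim(\lambda,\complementary{k}{\kappa})$ to evaluating a Schur function at some virtual alphabet attached to $\complementary{k}{\kappa}$, identified via Frobenius--Schur theory. That is not the mechanism, and as stated it would not produce the right object: the Frobenius--Schur function $F\symssymb{\lambda}$ at the shifted Frobenius coordinates of $\complementary{k}{\kappa}$ is not the same as $\symssymb{\lambda}$, and nowhere in the recipe does one evaluate $\symssymb{\lambda}$ at a virtual alphabet built from $\complementary{k}{\kappa}$. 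The dimensions actually enter through iterated Pieri. The $\chi$-factor $\left(t/2\pi\right)^{-\symp{1}{\BBB}}$ is Taylor-expanded in $\symp{1}{\BBB}$, producing $(\symp{1}{\BBB})^u$ against $\syms{\lambda}{\BBB}$; Pieri gives $\syms{\lambda}{\BBB}(\symp{1}{\BBB})^u=\sum_{\nu}\dim(\lambda,\nu)\syms{\nu}{\BBB}$. Only then does the Bump--Gamburd Laplace identity (Lemma~\ref{lemma.1}) enter: summing $\syms{\kappa}{\AAA}\syms{\nu}{\BBB}/\Delta(\AAA;\BBB)$ over set partitions of $\DDD$ gives $\pm\syms{\mu}{\DDD}$; the limit $\DDD\to (2k)\cdot\Zero$ retains only $\mu=\phi$; and Lemma~\ref{lemma.visual} then forces $\nu=\complementary{k}{\kappa}$, which in turn fixes $u=k^2-|\kappa|-|\lambda|$, giving the $1/(k^2-N)!$. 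So the $\dim$ is a Pieri coefficient, not a Frobenius--Schur evaluation.

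The Olshanski--Regev--Vershik/Ivanov machinery you invoke does appear in the paper, but only in Theorem~\ref{thm.dim}, \emph{after} Theorem~\ref{thm.main} is established, to prove polynomiality in $k$ of the already-identified $\dim(\lambda,\complementary{k}{\kappa})$ and to furnish Giambelli-type determinants for computing them. Your plan misplaces that step inside the derivation of Equation~\eqref{eqn.main}. A smaller inaccuracy: the confinement of $\kappa$ to the $k\times k$ square is not an input to Bump--Gamburd (which applies to arbitrary pairs) but a consequence of selecting the term $\mu=\phi$ via complementarity.
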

The (summary of the) proof of this theorem is on page~\pageref{proof.thm.main}, where references are given to the different lemmas and propositions needed. Some of the main formulas are listed in the appendix, page~\pageref{appendix}.

As will be clear from the definition of $\dim(\mu,\nu)$ and $\complementary{k}{\kappa}$, the formula would give $c_N(k) = 0$ if applied when $N>k^2$. 

The goal of this theorem is to express the $c_N(k)$ in terms of several functions that are very simple to precompute, with a clear sense of breaking down each into simpler and simpler functions and with many of them independent of $k$. We know from previous work of CFKRS that some components of the $P_k$ will not be computable exactly, and we have relegated that part to computing the Taylor coefficients of the prime zeta function, which can be done to high numerical precision. The sum giving the $W_{\mu\nu}$ is the last bit that is problematic, as it is infinite. Note however that $\coeff{r}{n}$ decreases exponentially fast in $n$ to 0, so the tail of the sum could be bounded, for instance if we only want a numerical approximation. See also Section~\ref{sec.precomputation} for more details.

Before commenting further on this theorem, we state another theorem that will clarify the dependency in $k$ of the RHS of Equation~\eqref{eqn.main}.
\begin{thm}
\label{thm.dim}
If $\kappa$ and $\lambda$ are partitions, then 
\begin{equation}
\label{eqn.dim}
\dim(\lambda, \complementary{k}{\kappa}) =  \dim(\kappa, \complementary{k}{\lambda}) = \frac{g_k B(k)}{k^2\cdot (k^2-1)\cdots(k^2-|\kappa|-|\lambda|+1)},
\end{equation}
where $B(k)$ is an explicitly computable polynomial in $k$ of degree bounded by $2(|\kappa|+|\lambda|)$ and $g_k$ is the combinatorial factor in  Conjecture~\ref{conj.KS}.
\end{thm}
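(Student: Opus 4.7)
The plan combines Aitken's skew Jacobi-Trudi determinant (for the symmetry) with the Okounkov-Olshanski formula for shifted Schur functions (for the polynomial form). First, Aitken gives
\begin{equation*}
\dim(\lambda, \complementary{k}{\kappa}) = (k^2-N)!\, \det_{1\leq i,j\leq k}\!\left(\frac{1}{(\complementary{k}{\kappa}_i - \lambda_j + j - i)!}\right).
\end{equation*}
Substituting $\complementary{k}{\kappa}_i = k - \kappa_{k-i+1}$ and reversing the row order (contributing a sign $(-1)^{\binom{k}{2}}$) rewrites the $(i,j)$-entry as $1/(i+j-1-\kappa_i-\lambda_j)!$, which is invariant under simultaneous transposition and the exchange $\kappa \leftrightarrow \lambda$. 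Since a matrix and its transpose have equal determinants, this immediately yields $\dim(\lambda, \complementary{k}{\kappa}) = \dim(\kappa, \complementary{k}{\lambda})$.

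For the closed form I would use $g_k = f^{(k^k)}$, the number of standard Young tableaux of the $k\times k$ square (by the hook-length formula on the rectangle), and then apply the Okounkov-Olshanski identity $f^{\lambda/\mu} = f^\lambda\, s^*_\mu(\lambda)/(|\lambda|)_{|\mu|}$ twice. First, with outer shape $\complementary{k}{\kappa}$ and inner shape $\lambda$, it yields $\dim(\lambda, \complementary{k}{\kappa}) = f^{\complementary{k}{\kappa}}\, s^*_\lambda(\complementary{k}{\kappa})/(k^2 - |\kappa|)_{|\lambda|}$; second, after observing that the $180^\circ$ rotation of $(k^k)/\kappa$ is precisely $\complementary{k}{\kappa}$, so that $f^{\complementary{k}{\kappa}} = f^{(k^k)/\kappa}$, it yields $f^{\complementary{k}{\kappa}} = g_k\, s^*_\kappa((k^k))/(k^2)_{|\kappa|}$. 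Combining these two formulas and using $(k^2)_N = (k^2)_{|\kappa|}(k^2 - |\kappa|)_{|\lambda|}$ identifies
\begin{equation*}
B(k) = s^*_\kappa((k^k)) \cdot s^*_\lambda(\complementary{k}{\kappa}).
\end{equation*}

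The degree bound is inherited from the Kerov-Olshanski-Okounkov theory of polynomial functions on Young diagrams. The classical Schur function $s_\mu(1^k) = \dim_{\mathrm{GL}_k} V_\mu$ is polynomial in $k$ of degree $|\mu|$ by the Weyl dimension formula, so by homogeneity $s_\mu(k^k) = k^{|\mu|}\dim_{\mathrm{GL}_k} V_\mu$ has degree $2|\mu|$; the shifted Schur function $s^*_\mu$ differs from $s_\mu$ by lower (shifted) degree corrections and so inherits the same bound. Because both $(k^k)$ and $\complementary{k}{\kappa}$ have parts linear in $k$, one gets $\deg_k s^*_\kappa((k^k)) \leq 2|\kappa|$ and $\deg_k s^*_\lambda(\complementary{k}{\kappa}) \leq 2|\lambda|$, hence $\deg_k B(k) \leq 2(|\kappa|+|\lambda|) = 2N$.

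The main obstacle is the bookkeeping in the second paragraph: one must apply Okounkov-Olshanski in the correct direction (outer partition varies, inner is fixed) and identify $\complementary{k}{\kappa}$ with the rotated complement of $\kappa$ in the $k\times k$ square. A secondary delicate point is the degree count, where one must distinguish the shifted-symmetric total degree $|\mu|$ of $s^*_\mu$ from the degree $2|\mu|$ in $k$ that arises when evaluating at a diagram whose number of parts and individual parts both scale linearly with $k$. Viewed through this lens, the symmetry of the first paragraph yields as a corollary the (nontrivial) shifted-Schur identity $s^*_\kappa((k^k))\,s^*_\lambda(\complementary{k}{\kappa}) = s^*_\lambda((k^k))\,s^*_\kappa(\complementary{k}{\lambda})$.
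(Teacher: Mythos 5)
Your proof is correct, and while it is structurally parallel to the paper's, it relies on a genuinely different set of tools. The paper works with the Frobenius--Schur functions $F\symssymb{\mu}$ of Olshanski--Regev--Vershik, evaluated at shifted Frobenius coordinates, and proves the degree bound via the Giambelli formula (reducing to hooks, then to supersymmetric Schur polynomials, then to supersymmetric power sums). You instead use the Okounkov--Olshanski shifted Schur functions $s^*_\mu$ and their characterizing identity $f^{\lambda/\mu}=f^\lambda\, s^*_\mu(\lambda)/(|\lambda|)_{|\mu|}$, getting the clean closed form $B(k)=s^*_\kappa((k^k))\cdot s^*_\lambda(\complementary{k}{\kappa})$ directly. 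The two theories are close cousins (Frobenius--Schur functions are essentially shifted Schur functions re-expressed in Frobenius coordinates), so the derivation of the falling-factorial denominator and the appearance of $g_k$ via the $180^\circ$-rotation of $(k^k)/\kappa$ is the same underlying idea on both sides; but your formulation is arguably more self-contained for a reader who knows Okounkov--Olshanski and not the [FS] paper.

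The Aitken-determinant argument for the symmetry $\dim(\lambda,\complementary{k}{\kappa})=\dim(\kappa,\complementary{k}{\lambda})$ is a genuine bonus. The paper does not prove this symmetry directly at all --- indeed its own closing remark acknowledges that its proof ``breaks some of the symmetry of the statement.'' Your one-line computation (substitute $\complementary{k}{\kappa}_i=k-\kappa_{k-i+1}$, reverse rows, observe the $(i,j)$-entry $1/(i+j-1-\kappa_i-\lambda_j)!$ is symmetric under simultaneous $(i,j)\leftrightarrow(j,i)$, $\kappa\leftrightarrow\lambda$, then use $\det M=\det M^T$) supplies the missing direct proof. As a by-product you also get the nontrivial shifted-Schur identity $s^*_\kappa((k^k))\,s^*_\lambda(\complementary{k}{\kappa})=s^*_\lambda((k^k))\,s^*_\kappa(\complementary{k}{\lambda})$, which the paper's remark hints should exist but does not prove.

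One spot deserves tightening: the degree bound. You say $s^*_\mu$ ``inherits'' the degree-$2|\mu|$ bound from $s_\mu$, and you correctly flag the subtlety that both the parts and the number of parts of $(k^k)$ and $\complementary{k}{\kappa}$ scale like $k$, but the sentence as written glosses over why the growing number of variables does not cause trouble. The cleanest justification --- and the exact parallel of what the paper does on the Frobenius side --- is to expand $s^*_\lambda$ in the shifted power sums $p^*_r$: in the canonical filtration $s^*_\lambda$ is a linear combination of monomials $\prod_i p^*_{\rho_i}$ with $|\rho|\le|\lambda|$, and a Faulhaber-type computation shows $p^*_r((k^k))$ and $p^*_r(\complementary{k}{\kappa})$ are polynomials in $k$ of degree at most $r+1\le 2r$ (the case $r\ge 1$ being all that is needed). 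Each monomial then has degree at most $\sum_i(\rho_i+1)=|\rho|+\ell(\rho)\le 2|\rho|\le 2|\lambda|$, which gives $\deg_k B(k)\le 2(|\kappa|+|\lambda|)$ rigorously. This is not a gap in your approach, just a place where the informal phrasing should be replaced by the power-sum computation; once spelled out, your proof is complete and correct.
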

The proof of this theorem is on page~\pageref{proof.thm.dim}.

We now return to the discussion of Theorems~\ref{thm.main} and \ref{thm.dim} in the context of existing literature. 
Theorem~\ref{thm.main} has a  structure similar to Theorem~1.2 of \cite{CFKRS2}: both give an explicit formula for the coefficient $c_N(k)$   as a sum over pairs of partitions of total weight $N$. In both theorems, each summand is a product of a term ``explained by arithmetic'' and another one ``explained by combinatorics'' that has a nice dependency in $k$. The polynomials $N_k(\alpha, \beta)$ in \cite{CFKRS2} entered on the combinatorial side, and this paper uses instead the function $\dim(\lambda, \complementary{k}{\kappa})$. This has several advantages:
\begin{enumerate}
\item The paper \cite{CFKRS2} offered to compute the polynomials $N_k(\alpha,\beta)$ in $k$ using either interpolation on the values of $k \times k$ determinants or via a more direct evaluation (in Equation (3.11)\footnote{Unfortunately, this formula has a typo on the right hand side: one needs to substitute $s \times s$ for $k \times k$.}  of its Theorem~3.2) as a $s \times s$ determinant, where $s$ can be bounded, after careful reading, by $s \le \frac{|\alpha|+|\beta|}{2} = \frac{N}{2}$, with $N$ the index of the coefficient $c_N(k)$ that is considered (remember from Equation~\eqref{eqn.crk} that it is meaningful to let $N$ go up to $k^2$, when $k \in \mathbb{N}$). 

Since we rely on existing theory of the Frobenius-Schur functions, we can hope for better formulas to compute the $\dim(\lambda, \complementary{k}{\kappa})$.  Indeed, the Giambelli formula in Equation~\eqref{eqn.giambelli} will give the $\dim(\lambda, \complementary{k}{\kappa})$ as determinants of size at most $\floor{\sqrt{N}} \times \floor{\sqrt{N}}$, a significant gain from a computational perspective (see also Section~\ref{sec.uniform} for more on this bound).

\item Our formula gives a concise and direct formula for the analytic continuation in $k$ (this requires Theorem~\ref{thm.dim} as well).

\item
The derivation on the combinatorial side will be unified in \emph{Combinatorics of the lower order terms in the moment conjectures for the symplectic and orthogonal families of $L$-functions} with the derivation for other families of $L$-functions.

\item On the arithmetic side, while \cite{CFKRS2} gives a succession of steps, we have figured out the exact final formulas.
 Numerically, this is an advantage: the non-exact part of the computation is limited to the evaluation of the Taylor coefficients $\coeff{r}{n}$, and then the truncation of the sum for $W_{\mu\nu}$. It should be easier to achieve better precision, and  would be conceivable to use interval arithmetic to bound exactly the error actually made.

\item In \cite{Rub1}, Hiary and Rubinstein obtained uniform bounds on the coefficients $c_N(k)$. For this, they had to bound the arithmetic side and the combinatorial side separately. We did no 
work in this direction, but comment on connections in Section~\ref{sec.uniform}.
\end{enumerate}

\subsection{Structure of the paper}
This paper requires a variety of very different tools. In Section~\ref{sec.essentials}, we present the background needed on partitions, symmetric functions, alphabet manipulations and power series expansions of the Riemann zeta function and the prime zeta function. 
In Section~\ref{sec.recipe}, we present the beginning of the CFKRS recipe in details, but formulated using symmetric functions. In Section~\ref{sec.bumpgamburd}, we introduce the substitute to the $2k$-fold integrals of CFKRS, a lemma due to Bump and Gamburd in its basic form.  In Section~\ref{sec.alternate}, we compute the final expression handed to us by the recipe, using that lemma. Section~\ref{sec.explicitexpansion} contains the manipulations that allow us to transfer Dirichlet polynomials computations in the language of symmetric functions. In Section~\ref{sec.thm3}, we present the proof of Theorem~\ref{thm.dim}. Section~\ref{sec.corollaries} contains the various corollaries to the main results, and in particular the rederivation of the Keating-Snaith leading coefficient. In Section~\ref{sec.precomputation}, we explain how to compute the constants involved in our derivation. Due to the complexity of the formulas obtained, we have decided to add a  table of the main formulas and definitions in the appendix, page~\pageref{appendix}.

\begin{acknowledgements}
The author wishes to thank many people for useful discussions during the (long) period that led to this paper and in particular Profs.~Bump, Farmer and Nikeghbali. In addition, Profs.~Conrey and Kowalski read preliminary drafts and provided feedback and corrections. Many tests and computations were performed using  the computer algebra package \textsf{sage} \cite{sage}.
\end{acknowledgements}

\section{Some Essentials}
\label{sec.essentials}
\subsection{Essentials in the theory of partitions}
We rely almost exclusively on definitions and objects introduced in the first chapter of \cite{Macdonald}. We briefly present these definitions. 
\label{sec.partitions}
\subsubsection{Definition and easy invariants}
An (integer) partition $\lambda = ( \lambda_1,\cdots,\lambda_n) $ is a finite weakly decreasing sequence of non-negative integers, called \emph{parts}. We define the \emph{weigh}t $|\lambda|$ of $\lambda$ to be the sum $\sum_j \lambda_j$. If this weight is $n$, we also use the notation $\lambda \vdash n$. The length $l(\lambda)$ of $\lambda$ is the maximal $l$ such that $\lambda_{l} \ne  0$. An empty sequence $()$ is thus also a partition, of length 0 and weight 0. We denote this empty partition by $\phi$. 

We freely think of partitions in terms of \emph{Ferrers} (also called \emph{Young}) \emph{diagrams}. See Figure~\ref{fig.complementary}, page~\pageref{fig.complementary} for some examples. The Ferrers diagram of a partition is constituted of \emph{boxes}. For each box $\square$, we call the \emph{hook} of $\square$ the set consisting of the boxes exactly to the right or  exactly below $\square$, and $\square$ itself. The \emph{hook length} of $\square$ is the cardinality of that set.

The \emph{conjugate} of $\lambda$ is the partition obtained by flipping the diagram of $\lambda$ along the main diagonal. It is denoted $\lambda^t$.  

Given a partition $\lambda$, we can naturally associate a conjugacy class of permutations in  $\mathcal{S}_{|\lambda|}$ by considering that the $\lambda_i$ give the lengths of the cycles of permutations. We define $z_\lambda$ to be the order of the centralizer of an element of that conjugacy class. It is immediate to compute \begin{equation}
z_\lambda = \prod_{j\ge 1}  \left(j^{M_j(\lambda)}M_j(\lambda)!\right),\label{eqn.zlambda}
\end{equation}
where  $M_j(\lambda)$ counts the number of parts  of length $j$ (hence almost all of those will be 0). 

We also need what we call \emph{vectorized partitions}. Given a partition $\lambda$ of length $l\le L$, we sometimes want to consider the partition as a vector of length $L$.  In that case we use the symbol  $\lambda$ for the vector $(\lambda_1,\cdots,\lambda_l,0,\cdots,0)$, \textit{i.e.}~we pad the sequence of $\lambda$ with $L-l$ zeroes. We say that $\lambda$ is \emph{vectorized to length $L$}.  We can of course add any two partitions vectorized to the same length: we do this componentwise, as vectors. We let $\rho_L$ be the partition $(L-1,\cdots,1)$ (with triangular Ferrers diagram), vectorized to length $L$ (by appending one 0). 

In addition to conjugacy classes in symmetric groups, partitions are used to index many objects of combinatorial interest, such as symmetric function bases or characters of symmetric groups. We will explain some of this in Section~\ref{sec.symmetric}.

\subsubsection{The dimension of a (skew) partition}
\label{sec.dimension}
We define the \emph{Young lattice} as a graph $(V,E)$. 
Let $V_n$ be the set of Ferrers diagrams of weight $n$. Set $V$ to be $\cup_{n \ge 0} V_n$. The (undirected) edges $E$ only connect Ferrers diagram that differ by the addition or removal of exactly one box. Edges can thus only link partitions whose weight differ by 1. 

Given two partitions $\kappa, \lambda$, of weight $k$ and $l$ respectively, we let $\dim(\kappa,\lambda)$ to be the number of paths in the Young lattice of length exactly $l-k$ connecting $\kappa$ and $\lambda$. In particular, this will be 0 if $k>l$, or if the Ferrers diagram of $\kappa$ is not contained inside the Ferrers diagram of $\lambda$. 

There exist convenient formulas to compute $\dim(\kappa,\lambda)$, and these will be presented in Section~\ref{sec.FS}.
\subsection{Essentials in symmetric function theory}
We present here the concepts of symmetric functions. We follow entirely the conventions of Macdonald \cite{Macdonald}, except that we also define the symmetric polynomials $\sympsymb{0}$.
\label{sec.symmetric}
\subsubsection{The ring of symmetric polynomials}

Let $\XXX = \{x_1,\cdots,x_n\}$. Consider the ring $\mathbb{C}[[x_1,\cdots,x_n]]$ of power series in the variables $\XXX$. The symmetric group $\mathcal{S}_n$ acts on this ring by permuting variables, and a power series is symmetric if it is invariant under this action. The symmetric power series form a subring 
\begin{equation}
\bar{\Lambda}_n = \bar{\Lambda}_n(\XXX) = \mathbb{C}[[x_1,\cdots,x_n]]^{\mathcal{S}_n}.
\end{equation}
We also set $\Lambda_n = \bar{\Lambda}_n \cap \mathbb{C}[\XXX].$ The ring $\Lambda_n$ is a graded ring: we have 
\begin{equation}
\Lambda_n = \underset{k \ge 0}{\oplus} \Lambda_n^{(k)},
\end{equation}
with each component consisting of polynomials of degree $k$ (together with the 0 polynomial).

We will use most often elements of $\bar{\Lambda}_{n}$, \textit{i.e.}~not just polynomials but infinite series of elements of $\Lambda_n$. We call these \emph{symmetric functions}.
\subsubsection{Bases}
 \label{sec.bases}
Combinatorialists have studied several different bases of polynomials in $\Lambda_n$. We introduce them now.

For each $\alpha = (\alpha_1,\cdots,\alpha_n) \in \mathbb{N}^n$, we set $x^\alpha := x_1^{\alpha_1}\cdots x_n^{\alpha_n}$.  Given a partition $\mu$ such that $l(\mu) \le n$, we let the \emph{monomial symmetric polynomials} be
\begin{equation}
\symm{\mu}{\XXX} := \sum_{x \in \XXX} x^\alpha,
\end{equation}
where the sum runs over all the distinct permutations $\alpha$ of $\lambda=(\lambda_1,\cdots,\lambda_n)$. The set of $\symmsymb{\lambda}$ such that $l(\lambda) \le n $ and $|\lambda|=k$ forms a $\mathbb{Z}-$basis of $\Lambda^{(k)}_n$.

For each $r \ge 0$, we define the \emph{complete symmetric polynomial} $\symhsymb{r}$ as
\begin{equation}
\symhsymb{r} := \sum_{\lambda \vdash r} \symmsymb{\lambda}.
\end{equation} 
In particular, we have $\symhsymb{0} =1 $. We also set $\symhsymb{\mu}:= \prod_{i=1}^{l(\mu)} \symhsymb{\mu_i}$. In particular, we have $\symhsymb{\phi} =1 $.

For each $r\ge 0$, we also define the \emph{power symmetric polynomial} $\sympsymb{r}$ as 
\begin{equation}
\symp{r}{\XXX} := \sum_{x \in \XXX} x^r.
\end{equation}
In particular\footnote{This is true even if some $x_i$ is 0! Actually, we always avoid directly setting any $x_i$ to 0, but we let them tend to 0. The value of $\symp{r}{\XXX}$ is then less shocking.}, we have $\symp{0}{\XXX} = |\XXX|$. We also set $\sympsymb{\mu} := \prod_{i=1}^{l(\mu)} \sympsymb{\mu_i}$. In particular, we\footnote{Macdonald does not define $\sympsymb{0}$, possibly because of the potential confusion arising from $\sympsymb{0} \ne \sympsymb{\phi}$. These will actually turn out to be quite helpful to us: they allow to temporarily hide away the dependency of expressions on the size of the set $\XXX$.} have $\sympsymb{\phi} = 1$.

The last set of polynomials is the \emph{Schur polynomials} $\symssymb{\mu}$ for $\mu$ running through partitions.
If $l(\mu) > n$, we set $\syms{\mu}{\XXX} = 0$ and otherwise 
\begin{equation}
\label{eqn.schur}
\syms{\mu}{\XXX} := \frac{\left| x_i^{\mu_j+n-j}\right|}{\left| x_i^{n-j} \right|},
\end{equation} 
after vectorizing $\mu$ to length $n$ if needed. Note that $\symssymb{\phi} =1$.

 \subsubsection{Transitions between bases}
 
 \label{sec.transition}
 For each pair of basis of $\Lambda$, there exists a transition matrix from one to the other. They have long been known, and Macdonald devotes a whole chapter to them \cite[Chapter~6]{Macdonald}.
 
For each partition $\lambda$ there exists a character $\chi^\lambda$ of the symmetric group $ \mathcal{S}_{|\lambda|}$. These characters are class functions, and thus only dependent on the cycle type of the conjugacy class considered.  Denote the value of that character on the conjugacy class of cycle type given by the partition $\mu$ as $\chi^\lambda(\mu)$.   This coefficient is known to be an integer, and the dimension of the character $\chi^\lambda$, \textit{i.e.}~$\chi^\lambda(\text{id})=\chi^\lambda((1,\cdots,1))$, equals $\dim(\phi,\lambda)$. The whole theory is beautifully introduced in \cite{Sagan}. 

It turns out these character values also serve as transition coefficients between the $\sympsymb{\mu}$ and the $\symssymb{\lambda}.$ Indeed, we have\footnote{Remember (from Equation~\eqref{eqn.zlambda}) that $z_\mu$ is the order of the centralizer of any permutation of cycle type $\mu$, and that therefore the conjugacy class of cycle type $\mu$ is of order $\frac{|\mu|!}{z_\mu}$.}
\begin{equation}
\label{eqn.transitionschur}
\symssymb{\lambda} = \sum_{\mu \vdash |\lambda|} \frac{1}{z_\mu} \chi^{\lambda}(\mu) \sympsymb{\mu}
\end{equation}
and
\begin{equation}
\label{eqn.transitionpower}
\sympsymb{\mu}=\sum_{\lambda\vdash |\mu|} \chi^{\lambda}(\mu) \symssymb{\lambda}.
\end{equation}
The only other transition we will need (actually a special case of the first transition presented above) is 
\begin{equation}
\symh{n}{\AAA} = \sum_{\kappa \vdash n} \frac{1}{z_\kappa} \symp{\kappa}{\AAA}.
\end{equation}

 \subsubsection{Pieri rule}
This rule gives a multiplication formula in $\Lambda$ for a Schur polynomial $\symssymb{\kappa}$ and the power polynomial $\sympsymb{1}$:
\[
\symssymb{\kappa} \sympsymb{1} = \sum_{\kappa^+} \symssymb{\kappa^+},
\]
where $\kappa^+$ runs through partitions $\mu$ such that $\dim(\kappa,\mu)$ equals 1. By repeated application of this rule, we see that 
\[
\symssymb{\kappa} (\sympsymb{1})^r = \sum_{\mu\vdash |\kappa|+r} \dim(\kappa,\mu) \symssymb{\mu}.
\]

 \label{sec.pieri}
  
\subsection{Essentials on alphabets}
We only use a very elementary version of Lascoux' theory of alphabets. This paper could almost surely be simplified by using more of that theory. 
\subsubsection{Definition}
To us, alphabets will be finite sets of indeterminates. Sometimes, indeterminates might happen to take the same value, but they will always be different indeterminates. If $\DDD$ is an alphabet, its cardinality is thus constant, and denoted by $|\DDD|$.  

We distinguish some special alphabets: $\One:= \{1\}$ and $\Zero := \{0\}$, both of cardinality 1.

\subsubsection{Notational shortcut for alphabets}
We will use alphabets constructed on three main alphabets: $\AAA$, $\BBB$ and $\DDD$. For simplicity of notation, we index elements of $\AAA$ by $\alpha$, for $\BBB$ by $\beta$ and for $\DDD$ by $\shift$. In the same spirit, we write 
 \[
\prod_{\shift} f(\shift) \text{ for } \prod_{\shift \in \DDD} f(\shift).
\]

Note also that there will be significant advantage in using the notation  $\prod_{\shift} f(\shift)$ instead of  $= \prod_{i=1}^{|\DDD|} f(\shift_j)$ for instance: the latter notation imposes an (unnatural) order on the elements of $\DDD$, while the former  localizes any dependency in the size of the alphabet, and spares the introduction of dummy indices.
\subsubsection{Basic operations on alphabets} 


%
Given an alphabet $\AAA$, or maybe a pair of alphabets $(\AAA,\BBB)$, one can define another alphabet $\XXX$ by performing some operation on the elements of the starting alphabet(s). We present a few constructions.

\paragraph{Repetition} Given an alphabet $\AAA$ and $k\in \mathbb{N}$, we define $k \cdot \AAA$ by taking the union of $k$ disjoint copies of $\AAA$. By extension, for any $k \in \mathbb{C}$, we can define a new ``abstract'' alphabet by
\begin{equation}
\symp{m}{k \cdot \AAA} := k \symp{m}{\AAA}.
\end{equation}
\paragraph{Pairwise addition} Given two alphabets $\AAA = \{\alpha\}$ and $\BBB = \{\beta\}$, we define $\AAA \calplus \BBB := \{\alpha+\beta: \alpha \in \AAA,\beta \in \BBB\}$. This implies 
\begin{equation}
\symp{m}{\AAA\calplus\BBB} =  \sum_{u\ge 0}\binom{m}{u} \symp{u}{\AAA}\symp{m-u}{\BBB}.
\label{eqn.pairwiseaddition}
\end{equation}
We have for instance $\symp{0}{\AAA \calplus \BBB} = \symp{0}{\AAA} \symp{0}{\BBB} = |\AAA||\BBB|$ and 
$\symp{1}{\AAA \calplus \BBB} = |\AAA| \symp{1}{ \BBB} + |\BBB| \symp{1}{ \AAA}$. Note how $\sympsymb{0}$ turns out to be useful to obtain uniform formulas.

This definition already includes the notion of translates of alphabets. Indeed, given an alphabet $\AAA = \{\alpha\}$ and an element $r \in \mathbb{C}$, we have $\AAA \calplus \{r\} =\{\alpha +r \}$.
\paragraph{Scaling}
We can also scale alphabets by a complex number. We write this  $\phi_r(\AAA) := \{r \alpha:\alpha \in \AAA\}$, which implies
\begin{equation}
\symp{m}{\phi_r(\AAA)} = r^m \symp{m}{\AAA}.
\end{equation}
A special case occurs when $r = -1$, where we use the shortcut
$ \neg{\AAA} := \phi_{-1}(\AAA). $

We can also define scalings by a partition $\kappa$:
\begin{equation}
\phi_\kappa(\AAA) := \phi_{\kappa_1}(\AAA) \calplus \cdots\calplus \phi_{\kappa_{l(\kappa)}}(\AAA).
\end{equation}

Despite these simple definitions, it already becomes harder (but will be useful) to compute $\symp{m}{\phi_\kappa(\AAA)}$.  Let $l = l(\kappa)$. Then,
\begin{align}
\symp{m}{\phi_\kappa(\AAA)} &= \symp{m}{\phi_{\kappa_1}(\AAA)\calplus \cdots \calplus \phi_{\kappa_{l(\kappa)}}(\AAA)}
\nonumber\\
& =  \sum_{i_1,\cdots,i_l} \binom{m}{i_1,\cdots,i_l} \symp{i_1}{\phi_{\kappa_1}(\AAA)}\cdots  \symp{i_l}{\phi_{\kappa_l}(\AAA)}\nonumber\\
& =  \sum_{i_1,\cdots,i_l} \binom{m}{i_1,\cdots,i_l} \kappa_1^{i_1} \cdots \kappa_l^{i_l}  \symp{i_1}{\AAA}\cdots  \symp{i_l}{\AAA}\nonumber\\
\nonumber& =  \sum_{\mu\vdash m } \binom{m}{\mu} \symm{\mu}{\kappa}  \symp{\mu}{\AAA} \symp{0}{\AAA}^{l(\kappa)-l(\mu)}\\
& =  \sum_{\mu\vdash m } \binom{m}{\mu} \symm{\mu}{\kappa}  \symp{\mu}{\AAA} |\AAA|^{l(\kappa)-l(\mu)},
\label{eqn.pmphik}
\end{align}
where $\binom{m}{\mu} := \binom{m}{\mu_1,\cdots,\mu_{l(\mu)}}$ is a multinomial coefficient.
The first equality follows from the definition of $\phi_\kappa(\AAA)$, the second from iterating the definition of pairwise addition, the third from the definition of $\symp{l}{\phi_r(\AAA)}$ and the fourth from the definition of $\symm{\mu}{\kappa}$ and re-sorting of the vector $\vec{i}$ into the partition $\mu$. The fifth uses the definition of $\symp{0}{\AAA}$. 

It follows immediately from this that we have $\symp{0}{\phi_\kappa(\AAA)} = |\AAA|^{l(\kappa)}$.

\paragraph{Pairwise multiplication} Similarly to pairwise addition, we define pairwise multiplication by
\begin{equation}
\AAA \caltimes \BBB := \{ \alpha\cdot \beta : \alpha \in \AAA, \beta \in \BBB \},
\end{equation}
which implies \begin{equation}
\symp{m}{\AAA \otimes\BBB} = \symp{m}{\AAA} \symp{m}{\BBB}.
\end{equation}
Of course, scaling is a special case of pairwise multiplication. Indeed, $\phi_r(\AAA) = \{r\} \caltimes \AAA$. In general, we have $\symp{0}{\AAA \caltimes \BBB}  = \symp{0}{\AAA} \symp{0}{\BBB}  = |\AAA|\cdot |\BBB|$.
\paragraph{Elementwise exponentiation} Let $X \in \mathbb{R}, X>0$. Given an alphabet $\AAA = \{\alpha\}$, we set $X^\AAA := \{ X^\alpha\}$. We immediately get:
\begin{equation}
\symp{m}{\calxxp{\AAA}}  := \sum_{u\ge 0} \frac{m^u (\ln X)^u}{u!} \symp{u}{\AAA}.
\label{eqn.exponential}
\end{equation}
This verifies $\symp{0}{X^\AAA} = \symp{0}{\AAA} = |\AAA|$. 

The definitions guarantee that 
\begin{equation}
\symp{m}{\calxxp{\AAA \calplus \BBB}} = \symp{m}{\calxxp{\AAA} \caltimes \calxxp{\BBB}} = \symp{m}{\calxxp{\AAA}}\cdot \symp{m}{\calxxp{\BBB}} .
\end{equation}
\paragraph{Resolvent} Finally, we define 
\begin{equation}
\Delta(\AAA;\BBB) := \prod_{\alpha\beta} (\alpha-\beta) = \prod_{\gamma \in \AAA \calplus \negBBB} \gamma,
\end{equation}
which is sometimes called the resolvent.

\subsection{Essentials in number theory}
\subsubsection{The Riemann zeta function}
For completeness, we redefine the Riemann zeta function 
\[
\zeta(s) = \sum_{n \ge 1} \frac{1}{n^s}      \quad  \Re s> 1.
\]
We are actually interested in its power series expansion around 1, once the polar part is removed:
\[
\zeta(1+s) = \frac{1}{s} + \sum_{n=0}^\infty (-1)^n \frac{\gamma_n }{n!} s^n= \frac{1}{s}\left(1 + \sum_{n=0}^\infty (-1)^{n} \frac{\gamma_n }{n!} s^{n+1}\right),       
\]
where $\gamma_i$ are the Stieltjes constants and thus $\gamma_0$ is the Euler constant.

We will also need the expansion 
\begin{equation}
\log\left(s \cdot \zeta(1+s)\right) = -\sum_{n=1}^\infty \frac{(-1)^n}{n!} \cumulstieltjes{n} s^n,
\end{equation}
with $\cumulstieltjes{n}$ the \emph{Stieltjes cumulants}, as defined by Voros in \cite[p.~25]{VorosBook}. These coefficients  are more convenient for us, and are also more directly linked with the Riemann zeroes  than the coefficients $\gamma_n$ (\cite[p.~70]{VorosBook} and \cite[p.~676]{VorosOriginal}).

The zeta function satisfies a functional equation $\zeta(s) = \chi(s) \zeta(1-s)$, with $\chi(s) = 2^s \pi^{s-1} \sin \left(\frac{\pi s}{2}\right) \Gamma(1-s)$
and more importantly to us
\begin{equation}
\chi(s) =  \left( \frac{t}{2\pi} \right)^{\frac12-s} e^{\mathfrak{i}t +\pi \mathfrak{i}/4} \left(1+O\left(\frac{1}{t}\right)\right),
\label{eqn.chi1}
\end{equation}
for $s$ in a vertical strip in the complex plane. This leads to the relation
\begin{equation}
\chi\left(\frac12 + \mathfrak{i}t\right)^{-k} \prod_{\substack{\beta\\|\BBB|=k}} \chi\left(\frac12 + \mathfrak{i}t +\beta\right) = \left(\frac{t}{2\pi}\right)^{-\symp{1}{\BBB}} \left(1+O\left(\frac{1}{t}\right)\right),
\label{eqn.chi2}
\end{equation}
also for $s$ in a vertical strip.

\subsubsection{The prime zeta function} The prime zeta function is much less studied than the Riemann zeta function. Its definition is 
\[
\primezeta(s) = \sum_{p} \frac{1}{p^s}           \quad \Re s>1,
\]
which admits a meromorphic continuation to any simply connected domain $\Omega$ sandwiched between 
$ \{s : \Re s > 1\} \subset \Omega \subset \{s : \Re s > 0\}$ and such that $\Omega$ does not contain any $x$ with $nx$ the pole or a zero of $\zeta$, for a squarefree $n$ \cite{Froberg}. In any case, we will only need $\primezeta(s)$ for $\Re s >1$, at worst approaching $1$. 

The prime zeta function is much more adapted to the computations that will follow. In fact, we need its Taylor coefficients at various integers. 
When $r>1$, let 
\begin{align}
\primezeta(r+s) &=:  \sum_{n\ge 0} \coeff{r}{n} s^n,\label{eqn.coeffrn}
\end{align}
with $ \coeff{r}{0}=\primezeta(r)$.

When $r=1$, we would like to do the same, but $\primezeta(s)$ is not meromorphic at $1$. The singularity can easily be characterized. This uses a M\"obius inversion \cite{Froberg}:
\begin{equation}
\label{eqn.moebius}
\primezeta (s) = \sum_{n\ge 1} \frac{ \mu(n)}{n}  \log \zeta(ns) \quad \Re s > 1,
\end{equation}
which seems to have already been known to Glaisher (1891).  In a neighborhood of 1, we can use this to study the singularity:
\begin{equation}
\label{eqn.coeffr1}
\primezeta (1+s)  = -\log(s) -\sum_{n=1}^\infty \frac{(-1)^n}{n!} \cumulstieltjes{n} s^n+ \sum_{n\ge 2} \frac{\mu(n)}{n}  \log \zeta(n+ns).
\end{equation}

This leads us to define the coefficients 
\begin{equation}
\label{eqn.coeff1n}
\primezeta(1+s) =:  -\log (s) + \sum_{n\ge 0} \coeff{1}{n} s^n,
\end{equation}

with the observation that $\coeff{1}{0}= \sum_{n\ge 2} \mu(n)  \frac{\log \zeta(n)}{n} = B_1-\gamma_0 (\approx -0.315718452$), with $B_1$ the Mertens constant.
\label{sec.primezeta}

\section{Following the CFKRS Recipe, at least initially}
\label{sec.recipe}
CFRKS start with (an expression similar to) 
\begin{equation}
\label{eqn.shifts}
\int_0^T \left| \zeta\left(\frac{1}{2}+\mathfrak{i}t\right)\right|^{2k} \ud t = \lim_{\DDD \rightarrow (2k) \cdot \Zero} \underbrace{\int_0^T \chi\left(\frac12 + \mathfrak{i}t\right)^{\!\!-k} \prod_{\shift}\zeta\left(\frac12 + \shift + \mathfrak{i}t \right)  \ud t}_{(*)},
\end{equation}
where the functional equation is $\zeta(s) = \chi(s)\zeta(1-s)$ and $\DDD \rightarrow  (2k) \cdot \Zero$ means that we are letting each of the $2k$  indeterminates of $\mathcal{D}$ go to 0.

The recipe is composed of three main steps. As CFKRS insist \cite[p.~53]{CFKRS1}, these are not individually heuristics: none of those steps can be properly justified, and some in fact give diverging expressions. Some large terms are added while some others are then thrown out. However, the conjecture is that the composition of the three steps gives the correct final formula.  Indeed, it agrees with known theorems for special cases, and is otherwise reasonable and matches experimental data. We name the three steps as follows:
\begin{enumerate}
\item approximate functional equation step;
\item $\chi$-selection step;
\item diagonal selection step.
\end{enumerate}
We will always indicate a traditional equality with the traditional $=$ sign. However, we also use  the symbol $\Rightarrow$ to indicate one of the three unjustifiable steps above. 

We first state the approximate functional equation \cite{MR882550}. Let $0\le \Re s = \sigma \le 1$, and $t>C>0$ with $2\pi MN = t$. Then, uniformly in $\sigma$,
\begin{equation}
\zeta(s) = \sum_{n=1}^N \frac{1}{n^s} + \chi(s) \sum_{n=1}^M \frac{1}{n^{1-s}} +O(\text{error}).
\label{eqn.approx}
\end{equation} 
The \textbf{approximate functional equation step} replaces each $\zeta\left(\frac{1}{2} + \shift + \mathfrak{i}t\right)$ factor in the RHS of \eqref{eqn.shifts} by 
\begin{equation}
\sum_{n\ge1} \frac{1}{n^{\frac{1}{2} + \shift + \mathfrak{i}t}} + \chi\left(\frac{1}{2} + \shift + \mathfrak{i}t\right) \sum_{n\ge1} \frac{1}{n^{\frac{1}{2} - \shift -\mathfrak{i}t}},
\label{eqn.approx2}
\end{equation}
which is cause for serious concern: neither of the two sums converge when the shift $\shift$ is too small. Nevertheless, CFKRS carry on and replace each factor in $(*)$ by \eqref{eqn.approx2}. 

Explicitly, we obtain for $(*)$:
\begin{align}
\nonumber
(*)& =\int_0^T \chi\left(\frac12 + \mathfrak{i}t\right)^{\!\!-k} \prod_{\shift} \zeta\left(\frac12 + \shift + \mathfrak{i}t \right)  \ud t\\
&\Rightarrow
\int_0^T \chi\left(\frac12 + \mathfrak{i}t\right)^{\!\!-k} \prod_{\shift}\left[ 
\sum_{n\ge1} \frac{1}{n^{\frac{1}{2} + \shift + \mathfrak{i}t}} + \chi\left(\frac{1}{2} + \shift + \mathfrak{i}t\right) \sum_{n\ge1} \frac{1}{n^{\frac{1}{2} - \shift -\mathfrak{i}t}} \right] \ud t.
\label{eqn.after1}
\end{align}
A na\"ive expansion of this last product over $\delta$ leads to a sum of $2^{2k}$ terms. The alphabets $\AAA$ and $\BBB$ serve to encode which of the two terms is chosen for each factor. The sum is thus over the set partitions of $\DDD = \left\{\delta\right\}$. 
\begin{equation*}
\text{Eqn.}~\eqref{eqn.after1}= \int_0^T \chi\left(\frac12 + \mathfrak{i}t\right)^{\!\!-k} \!\!\! \sum_{\substack{\AAA,\BBB \text{ s.t.}\\ \AAA \cup \BBB = \DDD \\\AAA \cap \BBB = \{\}}}
\prod_\alpha \sum_{n\ge1} \frac{1}{n^{\frac{1}{2} + \alpha + \mathfrak{i}t}} \cdot
\prod_\beta \left( \chi\left(\frac{1}{2} + \beta + \mathfrak{i}t\right)  \sum_{n\ge1} \frac{1}{n^{\frac{1}{2} - \beta - \mathfrak{i}t}}\right) 
\ud t.
\end{equation*}
We now use information about $\chi$ contained in Equation~\eqref{eqn.chi1}. For high value of $t$, on the critical strip, $\chi(t)$ will oscillate very rapidly (due to the $t^{\mathfrak{i}t}$ term in Equation~\eqref{eqn.chi1}).  This will lead to a negligible contribution once integrated over $t$. Therefore, the \textbf{$\chi$-selection step} imposes the further restriction that $|\BBB|=k$. Indeed, those terms will dominate, due to the simplification of Equation~\eqref{eqn.chi2}:
\begin{equation}
\Rightarrow \int_0^T   \sum_{\substack{\AAA,\BBB \text{ s.t.}\\ \AAA \cup \BBB = \DDD \\|\AAA|=|\BBB|=k}}\left\{ \left( \frac{t}{2\pi}\right)^{-\symp{1}{\BBB}}
\prod_\alpha \sum_{n\ge1} \frac{1}{n^{\frac{1}{2} + \alpha + \mathfrak{i}t}} \cdot
\prod_\beta  \sum_{m\ge1} \frac{1}{m^{\frac{1}{2} - \beta - \mathfrak{i}t}}\right\}
\ud t,
\label{eqn.after2}
\end{equation}
where we have used the dummy variables $m$ and $n$ to prepare for what follows.
We now expand the products over $\alpha$ and $\beta$ in the most na\"ive way possible. This leads to the choice of $n_1, \cdots, n_k$ associated to $\alpha_1,\cdots,\alpha_k$ and  $m_1, \cdots, m_k$ associated to $\beta_1,\cdots,\beta_k$.  Each summand in the full expansion will be of the form
\begin{equation}
c(\vec{n},\vec{m},\vec{\alpha},\vec{\beta};t):= \frac{\prod_{i=1}^k m_i^{\beta_i}}{\prod_{i=1}^k n_i^{\alpha_i} } \cdot \frac{1}{(\prod_{i=1}^k n_i m_i)^{\frac{1}{2}}} \cdot \left(\frac{\prod_{i=1}^k m_i}{\prod_{i=1}^k n_i }\right)^{\mathfrak{i}t}.
\end{equation}
It is immediate that
\begin{equation}
\int_0^T c(\vec{n},\vec{m},\vec{\alpha},\vec{\beta};t) \ud t = 
\begin{cases}
 \frac{T}{M}  \frac{\prod_{i=1}^k m_i^{\beta_i}}{\prod_{i=1}^k n_i^{\alpha_i} } 
 & \text{if } M:= \prod_i m_i = \prod_i n_i;\\
o(T) & \text{otherwise.}
\end{cases}
\end{equation}
This is the ``justification'' \cite[p.~35]{CFKRS1} for the \textbf{diagonal term selection}, which will only keep the terms in \eqref{eqn.after2} not in $o(T)$ above. 

The expression obtained so far, at the stage \eqref{eqn.after2}, was of the form 
\begin{equation}
 \int_0^T   \sum_{\substack{\AAA,\BBB \text{ s.t.}\\ \AAA \cup \BBB = \DDD \\|\AAA|=|\BBB|=k}} \left( \frac{t}{2\pi}\right)^{-\symp{1}{\BBB}}
f(\AAA;\BBB)\ud t,
\label{eqn.after2bis}
\end{equation}
where the crucial feature of  $f$ is that it is symmetric in $\AAA$ and $\BBB$, in the sense that $f\left(\alpha_1,\cdots,\alpha_k; \beta_1,\cdots,\beta_k\right)  = f\left(\alpha_{\sigma(1)},\cdots,\alpha_{\sigma(k)}; \beta_{\sigma'(1)},\cdots,\beta_{\sigma'(k)}\right)$ for any  $(\sigma,\sigma') \in  \mathcal{S}_k\times  \mathcal{S}_k$, if $\AAA = \{\alpha\} $ and $\BBB = \{\beta\}$.
In a way that will be made much more explicit in Section~\ref{sec.explicitexpansion}, the diagonal selection step transforms the combinatorial structure of \eqref{eqn.after2bis} and thus also \eqref{eqn.after2} into an expression of the form
\begin{equation}
\Rightarrow  \int_0^T   \sum_{\substack{\AAA,\BBB \text{ s.t.}\\ \AAA \cup \BBB = \DDD \\|\AAA|=|\BBB|=k}} \left( \frac{t}{2\pi}\right)^{-\symp{1}{\BBB}}
\frac{\tilde{f}(\AAA;\BBB)}{\Delta(\AAA;\BBB)}\ud t,
\label{eqn.after3}
\end{equation}
where $\tilde{f}$ has the same symmetries as $f$. Before describing in more details in Section~\ref{sec.explicitexpansion} the transition from $f$ to $\tilde{f}$, we will now present the advantages of an expression of the form \eqref{eqn.after3}. 

\section{Laplace Expansions and the Bump-Gamburd Lemma}
\label{sec.bumpgamburd}
Imagine we could obtain the following equality for the numerator in Expression~\eqref{eqn.after3}:
\begin{equation}
\left(\frac{t}{2\pi}\right)^{-\symp{1}{\BBB}} \tilde{f}(\AAA;\BBB) = \sum_{\kappa,\lambda} b_{\kappa,\lambda} \syms{\kappa}{\AAA} \syms{\lambda}{\BBB},
\label{eqn.summand}
\end{equation} 
\textit{i.e.}~express this numerator as an infinite sum of products of two Schur functions $ \syms{\kappa}{\AAA} \syms{\lambda}{\BBB}$, elements of one of the most natural bases for expressions displaying this $\mathcal{S}_k\times \mathcal{S}_k$ symmetry. Obviously, the $b_{\kappa,\lambda}$ would still depend on $t$. We would then have, in \eqref{eqn.after3}, to sum \eqref{eqn.summand} over balanced disjoint set partitions $\AAA \cup \BBB = \DDD$. However we do know that, once this summing is done, we have to recover an expression with the full symmetry of \eqref{eqn.shifts}, \textit{i.e.}~a symmetry associated to permutations of the variables of $\DDD$, the full group $\mathcal{S}_{2k}$. 

Fortunately, a clever use of the Laplace expansion, due to Bump and Gamburd allows to do that very easily. We present a generalized version here.
\begin{lemma}[(see also \cite{BumpGamburd})]
\label{lemma.1}
Let  $\kappa$ and $\lambda$ be partitions,  vectorized to length $K$ and $L$ respectively.
Assume there exists a  partition $\mu$ that can be vectorized to length $K+L$ such that\footnote{The symbol $\cup$ refers here to the concatenation of vectors. As such, $(\kappa + \rho_K)\cup (\lambda + \rho_L) \ne (\lambda + \rho_L) \cup (\kappa + \rho_K)$. Once sorted, they are the same, but the sorting permutation will sometimes  have a different sign. } 
\begin{equation}
\mu + \rho_{K+L} = \sort{(\kappa + \rho_K)\cup (\lambda + \rho_L)} \label{eqn.sort}
\end{equation}
and that $|\DDD| = K+L$. Then,
\begin{equation}
\omega(\kappa,\lambda) \cdot  \syms{\mu} {\DDD } =  \sum_{
\substack{  \AAA,\BBB\text{ s.t.}\\ \AAA\cup\BBB = \DDD \\
|\AAA| = K\\
|\BBB| = L }} 
    \frac{\syms{\kappa } {\AAA} \, \syms{\lambda} {\BBB }}{\Delta(\AAA;\BBB)}
   \label{eqn.mainlemma}
\end{equation}
where 
$\omega(\kappa,\lambda)=\pm 1$ is the sign of the sorting permutation in \eqref{eqn.sort} (which is unique as $\mu$ is weakly decreasing). 

If two entries on the RHS of \eqref{eqn.sort} are equal, no $\mu$ can be found as $\mu$ would have to have an increase somewhere. In that case, Equation~\eqref{eqn.mainlemma} still remains true if its LHS is replaced by 0. 
\end{lemma}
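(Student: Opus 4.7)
The plan is to view both sides as built from determinants via the bialternant formula $\syms{\mu}{\XXX} = \det(x_i^{\mu_j + n - j})/\det(x_i^{n-j})$, and to exploit a single Laplace (cofactor) expansion of a $(K+L) \times (K+L)$ determinant. Specifically, I would form the matrix $M$ with rows indexed by $\delta \in \DDD$ and columns split into two blocks: the first $K$ columns have entries $\delta^{(\kappa + \rho_K)_j}$ (for $j = 1, \ldots, K$), the last $L$ columns have entries $\delta^{(\lambda + \rho_L)_j}$ (for $j = 1, \ldots, L$). Permuting the columns so the exponents become strictly decreasing turns $M$ into the bialternant numerator for $\syms{\mu}{\DDD}$, and by hypothesis the sign of this sorting permutation is precisely $\omega(\kappa, \lambda)$. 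Hence
\[
\det M = \omega(\kappa, \lambda) \, \syms{\mu}{\DDD} \, V(\DDD),
\]
where $V(\DDD) := \det(\delta^{K+L-j})$ is the Vandermonde in $\DDD$.

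Next, I would apply the Laplace expansion of $\det M$ along the first $K$ columns: the determinant becomes a signed sum, indexed by subsets $\AAA \subseteq \DDD$ of size $K$ (with $\BBB := \DDD \setminus \AAA$), of the product of two minors, which by the bialternant formula equal $\syms{\kappa}{\AAA} V(\AAA)$ and $\syms{\lambda}{\BBB} V(\BBB)$ respectively. Writing $\epsilon(\AAA, \BBB)$ for the Laplace cofactor sign, an inversion count on the fixed enumeration of $\DDD$ yields the elementary factorization
\[
V(\DDD) = \epsilon(\AAA, \BBB) \, V(\AAA) \, V(\BBB) \, \Delta(\AAA; \BBB),
\]
with exactly the same sign $\epsilon(\AAA, \BBB)$ appearing. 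Dividing the Laplace expansion through by $V(\DDD)$ therefore makes these signs cancel uniformly across the sum, leaving only $\omega(\kappa, \lambda)$ on the LHS, and produces exactly Equation~\eqref{eqn.mainlemma}.

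The degenerate case, in which two entries of $(\kappa + \rho_K) \cup (\lambda + \rho_L)$ coincide, falls out of the same setup: the matrix $M$ then has two identical columns, so $\det M = 0$ outright, and the Laplace expansion delivers the RHS of \eqref{eqn.mainlemma} equal to $0$, with no $\omega$ factor needing to be involved. The main obstacle is purely bookkeeping --- verifying that the Laplace sign $\epsilon(\AAA, \BBB)$ really does coincide with the sign relating $V(\DDD)$ to $V(\AAA) V(\BBB) \Delta(\AAA; \BBB)$ --- which reduces to checking that both signs equal $(-1)^{\sum_{\alpha \in \AAA} \mathrm{pos}(\alpha) - K(K+1)/2}$, where $\mathrm{pos}(\alpha)$ denotes the position of $\alpha$ in the chosen enumeration of $\DDD$.
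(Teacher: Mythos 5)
Your argument is correct and is essentially the same as the paper's proof: both apply a Laplace (cofactor) expansion to the bialternant numerator for $\syms{\mu}{\DDD}$ along a fixed block of $K$ columns, recognize the minors as the numerators of $\syms{\kappa}{\AAA}$ and $\syms{\lambda}{\BBB}$, and split the Vandermonde $V(\DDD)$ into $V(\AAA)$, $V(\BBB)$, and $\Delta(\AAA;\BBB)$. The one place you go beyond the paper is in explicitly verifying that the Laplace cofactor sign cancels against the sign in the Vandermonde factorization (via the inversion count $\sum_{\alpha \in \AAA} \mathrm{pos}(\alpha) - K(K+1)/2$), a point the paper leaves implicit with ``up to a sign''; that bookkeeping is correct and is a welcome addition.
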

In other words, provided we can get to an expression such as \eqref{eqn.summand}, this lemma, when used right-to-left, lets us very easily sum over set partitions and recover an expression for \eqref{eqn.after2} where the full $\mathcal{S}_{2k}$ symmetry is obvious. 

\begin{proof}[ of Lemma~\ref{lemma.1}]
It is easiest to start with a partition $\mu$, and expand $\syms{\mu}{\DDD}$ according to Equation~\eqref{eqn.schur}.  The exponents that appear in the matrix on the numerator are given by $\mu + \rho_{K+L}$. We can use the Laplace expansion on this numerator, separating both the $K+L$ rows (= variables) and the $K+L$ columns (= exponents) into subsets of size $K$ and $L$.  This gives rise to the partition $\AAA \cup \BBB = \DDD$ for the variables and the partition of Equation~\eqref{eqn.sort} for the exponents. We thus have the numerators of the expansions according to Equation~\eqref{eqn.schur} of the Schur functions on the RHS of Equation~\eqref{eqn.mainlemma}, up to a sign. 

The Vandermonde in the denominator for $\syms{\mu}{\DDD}$ can also be split, this time in three subproducts: the factors that involve either two variables ending in $\AAA$, two variables ending in $\BBB$, and one in each.  These subproducts end up respectively as the denominator in $\syms{\kappa}{\AAA}$, $\syms{\lambda}{\BBB}$, or, for the ``cross terms'' in the $\Delta(\AAA;\BBB)$. 
\end{proof}

This lemma is highly convenient, as we eventually need to take a limit $\DDD \rightarrow (2k)\cdot \Zero$. Since $\syms{\mu}{\DDD}$ is homogeneous of degree $|\mu|$ in the variables of $\DDD$, only one term actually matters once we take the limit $\DDD \rightarrow (2k)\cdot \Zero$ of \eqref{eqn.shifts}, the term where $\mu = \phi$ is the empty partition. It is thus desirable to know which pairs of $\kappa,\lambda$ will produce an empty $\mu$ in \eqref{eqn.sort}. Again, this will admit a very simple combinatorial interpretation.

\begin{lemma}
\label{lemma.visual}
Let $\kappa$ and $\lambda$ be two partitions, of partition length less or equal to $K$ and $L$ respectively. Then, 
\[\phi +  \rho_{K+L}  =  \rho_{K+L} = \sort{(\kappa + \rho_K)\cup (\lambda + \rho_L)} \]
if and only if $\lambda^t$, the transpose partition of $\lambda$, has vector $(L-\kappa_{K},\cdots,L-\kappa_1).$ Graphically, this means the Young diagrams of $\kappa$ and $\lambda^t$ can be assembled into a $K\times L$-rectangle. In this case, we also have $\omega(\kappa,\lambda) = (-1)^{|\lambda|} = (-1)^{k^2-|\kappa|}$. 
\label{lemma.2}
\end{lemma}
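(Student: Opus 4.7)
The plan is to translate the statement into a set-theoretic equivalence and then recognize it as a classical bijection between box-fitting partitions and subsets. Set $a_i := \kappa_i + K - i$ for $1 \le i \le K$ and $b_j := \lambda_j + L - j$ for $1 \le j \le L$. Since $\kappa, \lambda$ are weakly decreasing, both sequences $(a_i)$ and $(b_j)$ are strictly decreasing and nonnegative. The condition $\sort{(\kappa+\rho_K)\cup(\lambda+\rho_L)} = \rho_{K+L}$ is therefore equivalent to
\[
\{a_i : 1 \le i \le K\} \sqcup \{b_j : 1 \le j \le L\} = \{0, 1, \ldots, K+L-1\}.
\]
The maximum $a_1 = \kappa_1 + K - 1$ is bounded by $K+L-1$ exactly when $\kappa_1 \le L$, so I would first note that the set-equality forces $\kappa$ to fit in the $K \times L$ rectangle.

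The main combinatorial step is to show that, under this fit, $T := \{0,\ldots,K+L-1\} \setminus \{a_i\}$ encodes precisely the conjugate of the complement of $\kappa$ in the rectangle. I would order $T$ decreasingly as $t_1 > \cdots > t_L$, check that $\lambda_j := t_j + j - L$ automatically defines a partition in the $L \times K$ box (nonnegativity from $t_L \ge 0$, weak decrease from $t_j - t_{j+1} \ge 1$), and then appeal to the standard lattice-path encoding: a partition $\kappa$ fitting in the $K \times L$ rectangle corresponds to a SE path from NE to SW corner, and the "ones" along this path (the vertical steps) are placed at positions $a_i$, while the "zeros" (the horizontal steps, read from the opposite side of the path) are at positions $t_j$. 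Reading the complementary partition off from $T$ shows $\lambda^t_i = L - \kappa_{K+1-i}$, i.e.\ the Young diagrams of $\kappa$ and $\lambda^t$ tile the $K \times L$ rectangle. A small example would be used to fix conventions, since the indexing is where errors creep in.

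For the sign, $\omega(\kappa,\lambda)$ is the signature of the shuffle that merges two already-decreasing sequences into one. Inversions occur only across the two blocks, so I would count pairs $(i,j)$ with $a_i < b_j$. For fixed $i$, the elements of $\{0,\ldots,K+L-1\}$ exceeding $a_i$ number $K+L-1-a_i$, of which exactly $i-1$ lie among the $a$'s, so the number of $b$'s exceeding $a_i$ equals $L - \kappa_i$. Summing gives a total of $KL - |\kappa|$ inversions. Finally, $|\lambda| = |\lambda^t| = \sum_{i=1}^{K}(L - \kappa_{K+1-i}) = KL - |\kappa|$, so $\omega(\kappa,\lambda) = (-1)^{|\lambda|}$, which specializes to $(-1)^{k^2 - |\kappa|}$ when $K = L = k$.

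The main obstacle is not conceptual but notational: matching the various indexing conventions (partitions vs.\ vectors, decreasing vs.\ increasing, the direction in which one reads the lattice path) so that the identity $\lambda^t_i = L - \kappa_{K+1-i}$ and the sign $(-1)^{|\lambda|}$ both emerge cleanly from the same bijection. Once the encoding is fixed, everything else is a direct count.
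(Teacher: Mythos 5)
Your proposal is correct and takes a genuinely different route from the paper for the sign computation. Both proofs begin by translating the condition into the set-theoretic statement that $\{a_i=\kappa_i+K-i\}\sqcup\{b_j=\lambda_j+L-j\}$ equals $\{0,\dots,K+L-1\}$; the paper simply cites this equivalence as a classical fact from Macdonald, while you flesh it out via the lattice-path encoding (fine, if a little heavier than necessary). The real divergence is in the sign: the paper argues by induction, observing that transferring one box from $\kappa$ to $\lambda^t$ swaps exactly two entries between the two interleaved sequences and hence flips the sign, starting from the base case $\kappa=(L^K)$, $\lambda=\phi$ where no sort is needed. You instead compute the sign directly, counting cross-block inversions: for each $i$, the number of $b_j$ exceeding $a_i$ is $(K+L-1-a_i)-(i-1)=L-\kappa_i$, and summing gives $KL-|\kappa|=|\lambda|$ inversions. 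Your count is correct and has the advantage of being closed-form and self-contained; it also makes manifest exactly why the sign depends only on $|\lambda|$. The paper's box-transfer induction is shorter to write and visually intuitive, but leaves the reader to verify the claim that each transfer is a transposition (which follows from exactly the same arithmetic you carried out explicitly). Both are sound; yours trades a line of induction for a direct count and is arguably the more transparent argument.
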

We say that $\kappa$ and $\lambda$ are $(K,L)-$complementary if they satisfy this last condition, which is illustrated by Figure~\ref{fig.complementary}.
\begin{figure}
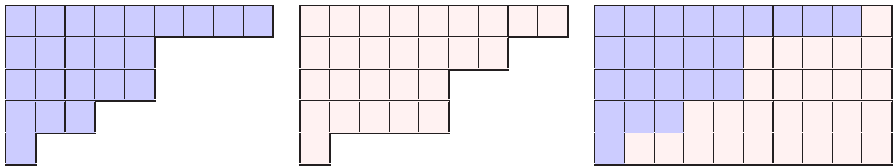

\begin{center}
\begin{tabular}{ccc}
$\young(\backblue\backblue\backblue\backblue\backblue\backblue\backblue\backblue\backblue,\backblue\backblue\backblue\backblue\backblue,\backblue\backblue\backblue\backblue\backblue,\backblue\backblue\backblue,\backblue)$
&
$\young(\backred\backred\backred\backred\backred\backred\backred\backred\backred,\backred\backred\backred\backred\backred\backred\backred,\backred\backred\backred\backred\backred,\backred\backred\backred\backred\backred,\backred)$
&
$\young(\backblue\backblue\backblue\backblue\backblue\backblue\backblue\backblue\backblue\backred,\backblue\backblue\backblue\backblue\backblue\backred\backred\backred\backred\backred,\backblue\backblue\backblue\backblue\backblue\backred\backred\backred\backred\backred,\backblue\backblue\backblue\backred\backred\backred\backred\backred\backred\backred,\backblue\backred\backred\backred\backred\backred\backred\backred\backred\backred)$
\end{tabular}
\end{center}
\caption{The partitions $\kappa = (9,5,5,3,1)$ and $\lambda = (5,$ $4,$ $4,$  $4,$ $4,$ $2,$ $2,$ $1,$ $1)$ are $(5,10)$-complementary, as the Young diagrams of $\kappa$ (represented here in dark blue) and $\lambda^t = (9,7,5,5,1)$ (represented here in light red, before its rotation of $180^\circ$) can be assembled to form a rectangle with 5 rows and 10 columns.}
\label{fig.complementary}
\end{figure}
\begin{proof}[ of Lemma~\ref{lemma.2}]
This proof follows from a classical combinatorial property \cite[p.~3]{Macdonald}:  the partitions $\kappa$ and $\lambda$ are $(K,L)-$complementary if and only if together the sequences $(\kappa_i+K-i: 1 \le i \le K)$ and $(\lambda_i+L-i: 1 \le i \le L)$ are a permutation of the $K+L$ integers between 0 and $K+L-1$.

For, the statement about the sign, we first observe that it is true when $\kappa = \left(L^K\right)$ is the full rectangle and the partition $\lambda$ is empty. Indeed, in that case no sorting is needed. Moreover, if it is true for a pair $(\kappa,\lambda)$, consider a pair of partitions $(\kappa^{-},\lambda^+)$, where $\kappa^-$ is obtained by removing a box from $\kappa$ and $\lambda^+$ by adding the corresponding box to $\lambda$, in such a way that $\kappa^{-},\lambda^+$ are also $(K,L)-$complementary. In terms of the sequences described above, it is easy to check that this transfer of a box merely swaps two of the entries between the sequences. Therefore, it alters the sign by 1. By induction on the size of $\lambda$, we are done. 
\end{proof}

We will only use complementarity in the case of $K=L$, \textit{i.e.}~ a square. In this case, we write 
$\complementary{k}{\lambda}$ for the $(k,k)-$complementary to $\lambda$.

\section{Alternate Ending to the CFKRS Recipe}
\label{sec.alternate}
It will be in our interest to use a slightly different expansion than in Equation~\eqref{eqn.summand}: we will instead expand $\tilde{f}(\AAA;\BBB)$ itself in a symmetric function basis. In fact, in Section~\ref{sec.explicitexpansion}, we will obtain the following proposition, which can be understood as a Taylor expansion for $\tilde{f}$, symmetric in two disjoint sets of variables. 

\begin{prop}
Starting from Equation~\eqref{eqn.after2}, the diagonal term selection in the CFKRS recipe leads to Equation~\eqref{eqn.after3}, with $\tilde{f}(\AAA;\BBB)$ admitting the expansion
\begin{equation}
\label{eqn.expansion}
 \tilde{f}(\AAA;\BBB) =  \sum_{\kappa,\lambda} d_{\kappa\lambda}(-1)^{|\lambda|} \syms{\kappa}{\AAA} \syms{\lambda}{\BBB} =   \sum_{\kappa,\lambda} d_{\kappa\lambda}\syms{\kappa}{\AAA} \syms{\lambda}{\negBBB} =\exp \left( \sum_{\mu,\nu} W_{\mu\nu} \symp{\mu}{\AAA}\symp{\nu}{\negBBB}\right),
\end{equation} 
where the sums run over partitions $\kappa,\lambda$ or $\mu,\nu$ and  
\begin{equation}
W_{\mu\nu} := \sum_{r \ge 1} \VVV{r}{\mu\nu}   \coeff{r}{|\mu|+|\nu|},
\label{eqn.Wmunu}
\end{equation}
with the $\VVV{r}{\mu\nu}$ in $\mathbb{Q}[k]$ (defined during the proof, in Section~\ref{sec.explicitexpansion}) and the $\coeff{r}{n}$ Taylor coefficients of $\primezeta(s)$ at $r$ (defined in Section~\ref{sec.primezeta}). 

Moreover, the $d_{\kappa\lambda}$ are given by polynomials with rational coefficients in the $W_{\mu\nu}$, with total weight in the $\mu$ (resp.~$\nu$) equal to the weight of $\kappa$ (resp.~$\lambda$).
\label{prop.expansion}
\end{prop}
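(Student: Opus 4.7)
The plan is to recast the surviving expression after the diagonal selection as an Euler product, take its logarithm, and expand in the natural power-sum basis before transitioning to the Schur basis. First I would carry out the diagonal selection explicitly: imposing $\prod_\alpha n_\alpha = \prod_\beta m_\beta$ on the double sum in Eq.~\eqref{eqn.after2} gives a Dirichlet series that factors over primes as $Z_k(\AAA;\BBB) = \prod_p Z_{k,p}(\AAA;\BBB)$, whose local factor is $Z_{k,p}(\AAA;\BBB) = \sum_{|a|=|b|}\prod_\alpha p^{-a_\alpha(1/2+\alpha)}\prod_\beta p^{-b_\beta(1/2-\beta)}$ with $a,b$ ranging over $\mathbb{N}^k$. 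Its singularities match those of $\prod_{\alpha,\beta}(1-p^{-1-\alpha+\beta})^{-1}$; factoring them out gives $Z_k = \prod_{\alpha,\beta}\zeta(1+\alpha-\beta)\cdot A_k$ with $A_k=\prod_p A_{k,p}$ an Euler product that converges and is analytic on a neighborhood of $\alpha=\beta=0$. The numerator of Eq.~\eqref{eqn.after3} is then $\tilde{f}(\AAA;\BBB)=\Delta(\AAA;\BBB)\cdot Z_k(\AAA;\BBB)$, regular across all hyperplanes $\alpha=\beta$.

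Next I would take the logarithm of $\tilde{f}$ and treat the $\zeta$- and arithmetic contributions in parallel. The identity $\log\zeta(s)=\sum_{r\ge 1}\primezeta(rs)/r$ combined with the expansions of Eqs.~\eqref{eqn.coeffrn} and~\eqref{eqn.coeff1n} splits each $\log\zeta(1+\alpha-\beta)$ into a singular piece $-\log(\alpha-\beta)$ (from the Laurent part of $\primezeta$ at $1$) plus a regular series $\sum_{r,n\ge 0}r^{n-1}\coeff{r}{n}(\alpha-\beta)^n$. The singular pieces sum over $\alpha,\beta$ to exactly $-\log\Delta(\AAA;\BBB)$, cancelling the $\log\Delta$ from $\tilde{f}=\Delta\cdot Z_k$. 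The regular pieces are handled by binomial expansion: $\sum_{\alpha,\beta}(\alpha-\beta)^n = \sum_u\binom{n}{u}\symp{u}{\AAA}\symp{n-u}{\negBBB}$, contributing to $W_{\mu\nu}$ only for partitions $\mu,\nu$ of length at most one, with $k$-independent $\VVV{r}{\mu\nu}$.

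The arithmetic factor needs more attention. As an analytic function symmetric in $\AAA$ and symmetric in $\BBB$, each $\log A_{k,p}$ expands near $\alpha=\beta=0$ in products $\symp{\mu}{\AAA}\symp{\nu}{\negBBB}$ with coefficients polynomial in $p^{-1}$ and $\log p$. Grouping by the power $p^{-r}$ and absorbing $(\log p)^n$ factors by Taylor-expanding $p^{-r\epsilon}$ in hidden shifts, summation over primes converts $\sum_p(\log p)^n p^{-r(1+\ldots)}$ into linear combinations of $\coeff{r}{n}$ for $n=|\mu|+|\nu|$. The resulting $\VVV{r}{\mu\nu}(k)\in\mathbb{Q}[k]$ is polynomial in $k$ because at prime power level $r$ the local sum has at most $r$ nonzero $a_\alpha$-entries and $r$ nonzero $b_\beta$-entries, while the remaining ``inert'' slots each contribute a factor of $k$; this heuristic also yields the degree bound $\deg_k\VVV{r}{\mu\nu}\le 2r$. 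Combining with the $\zeta$-contribution assembles $\log\tilde{f}=\sum_{\mu,\nu}W_{\mu\nu}\symp{\mu}{\AAA}\symp{\nu}{\negBBB}$ with the announced form of $W_{\mu\nu}$.

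Finally I would exponentiate and transition to the Schur basis. The formal power series expansion of $\exp$ yields $\tilde{f}=\sum_{\mu,\nu}c_{\mu\nu}\symp{\mu}{\AAA}\symp{\nu}{\negBBB}$ with $c_{\mu\nu}$ a $\mathbb{Q}$-polynomial in the $W_{\mu'\nu'}$ whose monomials satisfy the weight matching $\sum|\mu'|=|\mu|$ and $\sum|\nu'|=|\nu|$. Applying Eq.~\eqref{eqn.transitionpower} to both alphabets converts this to the $\symssymb{\kappa}(\AAA)\symssymb{\lambda}(\negBBB)$ basis with $d_{\kappa\lambda}=\sum_{\mu\vdash|\kappa|,\nu\vdash|\lambda|}\chi^\kappa(\mu)\chi^\lambda(\nu)c_{\mu\nu}$, a $\mathbb{Q}$-polynomial in the $W$'s respecting the claimed weight restrictions. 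The main obstacle I anticipate is the bookkeeping for $A_k$: identifying an explicit enough closed form for $\log A_{k,p}$, verifying that its organization by prime power level produces exactly the $\VVV{r}{\mu\nu}\coeff{r}{|\mu|+|\nu|}$ structure, and making rigorous the sharp degree bound $\deg_k\VVV{r}{\mu\nu}\le 2r$.
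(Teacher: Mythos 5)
Your proposal follows the classical CFKRS route: factor out the $\zeta$-product $\prod_{\alpha,\beta}\zeta(1+\alpha-\beta)$, identify its singular part with $\log\Delta$, and push everything else into a convergent arithmetic Euler product $A_k=\prod_p A_{k,p}$. This is a legitimate decomposition and the $\zeta$-contribution is handled correctly (including the cancellation of the Laurent singularity of $\primezeta$ at $1$ against $\Delta$). But the arithmetic factor $A_k$ is where the entire content of the proposition actually lives, and your treatment of it --- ``grouping by the power $p^{-r}$ and absorbing $(\log p)^n$ factors\dots the remaining `inert' slots each contribute a factor of $k$'' --- is, as you yourself acknowledge, a heuristic rather than a proof. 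You have not produced a closed form for $\log A_{k,p}$, nor have you shown that the coefficients that emerge are genuinely polynomial in $k$ with the stated degree bound, nor that they assemble into the specific structure $\sum_r \VVV{r}{\mu\nu}\coeff{r}{|\mu|+|\nu|}$. Calling this ``the main obstacle'' is accurate, but leaving it open means the proposition is not proved: the polynomiality in $k$ and the bookkeeping of how Taylor coefficients of $\primezeta$ enter are precisely what must be established.

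The paper circumvents this obstacle by never splitting off the $\zeta$-product at all. It keeps the full local factor $\sum_u\symh{u}{Q^\AAA}\symh{u}{Q^\negBBB}Q^u$, rewrites the complete homogeneous polynomials in the power-sum basis picking up $\frac{1}{z_\kappa z_\lambda}$, takes the logarithm to define the $k$-independent combinatorial constants $f_{\kappa\lambda}$, and then observes that each summand is literally $\symp{1}{Q^{\{|\kappa|\}\calplus\phi_\kappa(\AAA)\calplus\phi_\lambda(\negBBB)}}$, so that summing over primes gives a single evaluation $\primezeta\bigl(\{|\kappa|\}\calplus\phi_\kappa(\AAA)\calplus\phi_\lambda(\negBBB)\bigr)$. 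The $|\kappa|=1$ term produces $\primezeta(1+\cdot)$, whose $-\log$ singularity yields $1/\Delta(\AAA;\BBB)$ after exponentiation; the $|\kappa|=r\ge 1$ terms expand via the Taylor coefficients $\coeff{r}{n}$. The $k$-dependence and the degree bound then fall out for free from the alphabet identity $\symp{m}{\phi_\kappa(\AAA)}=\sum_{\mu\vdash m}\binom{m}{\mu}\symm{\mu}{\kappa}\symp{\mu}{\AAA}|\AAA|^{l(\kappa)-l(\mu)}$, because $|\AAA|=k$ and $l(\kappa)\ge l(\mu)$ forces the exponent of $k$ to be a nonnegative integer bounded by $l(\kappa)+l(\lambda)\le 2r$. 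So the factors of $k$ do not come from ``inert slots'' in a Dirichlet-coefficient expansion; they come from $\symp{0}{\AAA}=|\AAA|$. If you want to stick with your $\zeta$-times-$A_k$ decomposition, you would need to reproduce CFKRS's detailed analysis of the Euler product; the paper's unified prime-zeta treatment is both shorter and the one that makes Eq.~\eqref{eqn.Vrmunu} and the degree bound manifest.
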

The proof of this proposition is on page~\pageref{proof.prop.expansion}.

\begin{remark*} The constant term of $\tilde{f}(\AAA;\BBB)$, \textit{i.e.}~$d_{\phi\phi} = \exp\left(W_{\phi\phi}\right)$ is well-known, and equals the arithmetic factor present in the leading order conjecture of Keating-Snaith. We present this derivation in Section~\ref{sec.leading}.
\end{remark*}

This last proposition is the missing piece to prove Theorem~\ref{thm.main}.

\begin{proof}[ of Theorem~\ref{thm.main}]
\label{proof.thm.main}
After its three steps, the CFKRS recipe tells us to evaluate
\begin{equation}
\tilde{M}_k(T) := \lim_{\DDD \rightarrow (2k)\cdot\Zero}
\int_0^T   \sum_{\substack{\AAA,\BBB \text{ s.t.}\\ \AAA \cup \BBB = \DDD \\|\AAA|=|\BBB|=k}} \left( \frac{t}{2\pi}\right)^{-\symp{1}{\BBB}}
\frac{\tilde{f}(\AAA;\BBB)}{\Delta(\AAA;\BBB)}\ud t .
\end{equation}
We can expand $\tilde{f}$, thanks to Proposition~\ref{prop.expansion} which takes care of performing the diagonal selection, and gives a very explicit expansion for $\tilde{f}$:
\begin{equation*}
\tilde{M}_k(T)= \lim_{\DDD \rightarrow (2k)\cdot\Zero}
\int_0^T   \sum_{\substack{\AAA,\BBB \text{ s.t.}\\ \AAA \cup \BBB = \DDD \\|\AAA|=|\BBB|=k}} \frac{\sum_{u \ge 0} \frac{(-1)^u}{u!}\left(\log \left( \frac{t}{2\pi}\right)\right)^u \left(\symp{1}{\BBB}\right)^u
\sum_{\kappa,\lambda} d_{\kappa\lambda} (-1)^{|\lambda|} \syms{\kappa}{\AAA} \syms{\lambda}{\BBB}} {\Delta(\AAA;\BBB)}\ud t.
\end{equation*}
We are now at the crux of the argument in this paper: via iterated applications of Pieri's rule (see Section~\ref{sec.pieri}), it will be possible and actually easy to express $\syms{\lambda}{\BBB} (\symp{1}{\BBB})^u$ as a linear combination in the basis of Schur functions in the alphabet $\BBB$, \textit{i.e.}~$\{\syms{\nu}{\BBB}\}$. However, thanks to Lemma~\ref{lemma.1} we know that the outcome of summing any pair $\syms{\kappa}{\AAA}\syms{\nu}{\BBB}$ over alphabet partitions $\DDD = \AAA \cup \BBB$ is actually pretty simple: either 0 or a signed $\syms{\mu}{\DDD}$.  Actually, thanks to Lemma~\ref{lemma.visual}, we know even more: we know which pairs $\syms{\kappa}{\AAA}\syms{\nu}{\BBB}$ will produce a $\pm \syms{\mu}{\DDD}$, with $\mu = \phi$, which is the only $\mu$  to really matter once we take the limit $\DDD \rightarrow (2k)\cdot \Zero$: we need 
$\nu = \complementary{k}{\kappa}$. This can only occur if $|\nu| = k^2 - |\kappa|$, which in turn implies that $u = k^2 - |\kappa|-|\lambda|$. 

We put all those ideas together, and get:
\begin{equation}
\tilde{M}_k(T) =  \int_0^T \sum_{\kappa,\lambda} d_{\kappa\lambda} \frac{(-1)^{k^2 - |\kappa|-{|\lambda|}+{|\lambda|}}{\omega(\kappa,\complementary{k}{\kappa})}}{(k^2 - |\kappa|-|\lambda|)!}\dim(\lambda, \complementary{k}{\kappa}) \left(\log \left(\frac{t}{2\pi}\right) \right)^{k^2 - |\kappa|-|\lambda|}
\ud t.
\end{equation}
Remark that $(-1)^{k^2 - |\kappa|}{\omega(\kappa,\complementary{k}{\kappa})} =1$ by Lemma~\ref{lemma.visual}.

\end{proof}

\section{Explicit Expansion as Symmetric Functions}
\label{sec.explicitexpansion}
We still need to explicitly compute the transition brought in our expressions by selecting the diagonal terms. We thus need to prove Proposition~\ref{prop.expansion}.

\begin{proof}[of Proposition~\ref{prop.expansion}] 
\label{proof.prop.expansion}
We need to go back to the summands in Expression~\eqref{eqn.after2}, and apply the diagonal term selection to obtain $\tilde{f}(\AAA;\BBB)$.  We will first express $\log(\tilde{f}(\AAA;\BBB))$ in the basis of doubly-symmetric functions $ \symp{\mu}{\AAA}\symp{\nu}{\BBB}$ and prove the results about the $W_{\mu\nu}$.  Statements about the $d_{\kappa\lambda}$ will only come at the very end.


We first need some new notation. Let $\QQQ$ be the set\footnote{It would be interesting to use $\QQQ$ as an alphabet, but it has proved difficult to do that with any advantage.} composed of the inverse of primes: $\QQQ := \{ 1/p: p \text{ prime}\}$. It will be implicit that we have the variable $Q$ run through $\QQQ$. This prevents the overloading of the letter ``p'', that would soon occur otherwise. 

We can now consider the innermost terms of Expression~\eqref{eqn.after2}:
\begin{align*}
\prod_\alpha \sum_{n\ge1} \frac{1}{n^{\frac{1}{2} + \alpha + \mathfrak{i}t}} \cdot
\prod_\beta  \sum_{m\ge1} \frac{1}{m^{\frac{1}{2} - \beta - \mathfrak{i}t}} 
&= \prod_Q \left( \sum_{u \ge 0} \symh{u}{Q^\AAA}  Q^{u/2+\mathfrak{i}tu} \sum_{v \ge 0} \symh{v}{Q^\negBBB}  Q^{u/2-\mathfrak{i}tu} \right)\\
&\Rightarrow \prod_Q \left( \sum_{u \ge 0} \symh{u}{Q^\AAA}  \symh{u}{Q^\negBBB}  Q^{u} \right).
\end{align*}
This was the diagonal selection step: for each $Q$, the powers of $Q$ have to match, up to the slight shifts introduced by $\AAA$ and $\BBB$. We continue from that last expression, and use first a transition rule described in Section~\ref{sec.symmetric}. 
\begin{align}
\prod_Q \left( \sum_{u \ge 0} \symh{u}{Q^\AAA}  \symh{u}{Q^\negBBB}  Q^{u} \right)  
&= \prod_Q \left( \sum_{\substack{\kappa,\lambda\\|\kappa| = |\lambda|}} \frac{1}{z_\kappa z_\lambda} \symp{\kappa}{Q^\AAA}  \symp{\lambda}{Q^\negBBB}  Q^{|\kappa|} \right)\nonumber\\
&=  \prod_Q \exp \left( \sum_{\substack{\kappa,\lambda\\|\kappa| = |\lambda| \ge 1}} f_{\kappa \lambda} \symp{\kappa}{Q^\AAA}  \symp{\lambda}{Q^\negBBB}  Q^{|\kappa|} \right)\label{eqn.fkl}\\
&=\exp \left(\sum_{\substack{\kappa,\lambda\\|\kappa| = |\lambda| \ge 1}}  f_{\kappa \lambda} \sum_Q  \symp{1}{Q^{\{|\kappa|\}\calplus \phi_\kappa(\AAA)\calplus \phi_\lambda(\negBBB)}} \right) \nonumber\\
&=  \exp \left(\sum_{\substack{\kappa,\lambda\\|\kappa| = |\lambda| \ge 1}}  f_{\kappa \lambda} \primezeta\Bigl(\{|\kappa|\}\calplus \phi_\kappa(\AAA)\calplus \phi_\lambda(\negBBB)\Bigr) \right)\label{eqn.ready}
\end{align}
The $f_{\kappa\lambda}$ are defined and computed by using the power series for the logarithm on the RHS of the first line, which is valid as $z_\phi=1$. They do not depend on $k$. More information about them is available in Section~\ref{sec.fkl}.

It is clear from here that we should use the power series expansions for the prime zeta function given in Section~\ref{sec.primezeta}. The value of $|\kappa|$ will determine where we base our expansion. The value of $f_{(1)(1)}$, and $\phi_{(1)}(\AAA) \calplus \phi_{(1)}(\negBBB) = \AAA \calplus \negBBB$ are needed to derive the next lines, together with the expansions in Section~\ref{sec.primezeta}: 
\begin{align}
\label{eqn.ak}\nonumber
\eqref{eqn.ready} &= \exp \left(\sum_{\substack{\kappa,\lambda\\|\kappa| = |\lambda| \ge 1}}  f_{\kappa \lambda} \primezeta\Bigl(\{|\kappa|\}\calplus \phi_\kappa(\AAA)\calplus \phi_\lambda(\negBBB)\Bigr) \right)
\\\nonumber
&=\exp \left(\sum_{\substack{\kappa,\lambda\\|\kappa| = |\lambda| \ge 1}}  f_{\kappa \lambda} \sum_{u\ge 0} \coeff{|\kappa|}{u} \symp{u}{{\phi_\kappa(\AAA)\calplus \phi_\lambda(\negBBB)} }  - \sum_{\gamma\in\AAA\calplus\negBBB}\log(\gamma) \right)\\
\nonumber
&= \frac{1}{\Delta(\AAA;\BBB)}  \exp\left( \sum_{\substack{\kappa,\lambda\\|\kappa| = |\lambda| \ge 1}}  f_{\kappa \lambda}  \sum_{m,n=0}^\infty \coeff{|\kappa|}{m+n} \binom{m+n}{m,n}\symp{m}{\phi_\kappa(\AAA)}\symp{n}{\phi_\lambda(\negBBB)} \right)\\
\nonumber
&=  \frac{ \exp\left({\substack{ \displaystyle{\sum_{\substack{\kappa,\lambda\\|\kappa| = |\lambda| \ge 1}}  f_{\kappa \lambda}  \sum_{{\mu,\nu}}  \coeff{|\kappa|}{|\mu|+|\nu|} \binom{|\mu|+|\nu|}{\mu\cup \nu}}\times\hfill\\
\displaystyle{ \quad\quad\quad\quad\quad\quad\quad\quad\symm{\mu}{\kappa}   \symm{\nu}{\lambda} 
 |\AAA|^{l(\kappa)-l(\mu)} |\BBB|^{l(\lambda)-l(\nu)}
 \symp{\mu}{\AAA}  \symp{\nu}{\negBBB}}}}\right)}{\Delta(\AAA;\BBB)} \\
&=  \frac{1}{\Delta(\AAA;\BBB)}  \exp\left(
   \sum_{\mu,\nu} \sum_{r \ge 1} \VVV{r}{\mu\nu}  
  \coeff{r}{|\mu|+|\nu|} 
  \symp{\mu}{\AAA}
   \symp{\nu}{\negBBB}\right)\nonumber 
\end{align}
if we use the notational shortcut $\binom{|\lambda|}{\lambda} = \binom{|\lambda|}{\lambda_1,\cdots,\lambda_{l(\lambda)}}$, remember we had assumed $|\AAA| = |\BBB|=k$, 
and define 
\begin{equation}\VVV{r}{\mu\nu} :=\binom{|\mu|+|\nu|}{\mu\cup \nu} \sum_{\kappa,\lambda\vdash r}  f_{\kappa \lambda}   \symm{\mu}{\kappa}  \symm{\nu}{\lambda} k^{l(\kappa)+l(\lambda)-l(\mu)-l(\nu)}.
\label{eqn.Vrmunu}
\end{equation}
for any pair of partitions $\mu, \nu$ and $r$ a positive integer. For fixed $r,\mu,\nu$, $V$ is a polynomial in $k$, as we need $l(\kappa) \ge l(\mu), l(\lambda) \ge l(\nu)$ for the $\symm{\mu}{\kappa}$ and $\symm{\nu}{\lambda}$ to be nonzero. The exponent of $k$ can therefore never be negative.

We also deduce from this formula that the degree of $\VVV{r}{\mu\nu}$ in $k$ is bounded by $2r-l(\mu)-l(\nu)$.

The $d_{\kappa\lambda}$ are obtained by expanding the exponential and using the transition matrix between Schur and power polynomials (Equation~\eqref{eqn.transitionpower}). They are therefore algebraic expressions in the $W_{\mu\nu}$, with rational coefficients and with the total weight of the partitions involved in the $W$ less or equal to $|\kappa|+|\lambda|$. 
\end{proof}
\section{Proof of Theorem~\ref{thm.dim}}
\label{sec.thm3}
We wish to prove in this section Theorem~\ref{thm.dim}. This is of independent interest.

Olshanski, Regev and Vershik have proved results on the dimensions of skew partitions. Their perspective is a little different from ours, but we can still use their results to prove the theorem.

\subsection{Frobenius-Schur functions}
\label{sec.FS}
Frobenius-Schur functions are used to compute the dimensions $\dim(\lambda,\mu)$. We need to introduce the basics from the beautiful theory of \cite{FS}.

\subsubsection{Supersymmetric functions}
Define \emph{supersymmetric power polynomials} in two alphabets as 
\begin{align*}
\symp{r}{\AAA;\BBB} &:= \sum \alpha^r + (-1)^{r-1} \sum \beta^r,\\
\symp{\lambda}{\AAA;\BBB} &:= \prod_i \symp{\lambda_i}{\AAA;\BBB}
\end{align*}

We keep the same transition matrices for supersymmetric polynomials as for standard symmetric polynomials, so this also defines the \emph{supersymmetric Schur functions} $\syms{\lambda}{\AAA;\BBB}$ (via the rule given in Equation~\eqref{eqn.transitionschur}).

\subsubsection{Shifted Frobenius coordinates}
Given a partition $\lambda$, we define the two finite sequences of nonnegative integers $p_i := \lambda_i -i$ and $q_i := \lambda_i^t-i$ for $1\le i\le d$ (it turns out indeed that they both have length $d = d(\lambda)$, given as the number of entries on the main diagonal of the Ferrers diagram of $\lambda$. This $d$ is called the \emph{Frobenius dimension} of $\lambda$). This pair of finite sequences is the \emph{Frobenius coordinates} of $\lambda$ noted $(p_i|q_i)$.
For instance, the partition $(9,5,5,3,1)$ of Figure~\ref{fig.complementary} has Frobenius coordinates $(8,3,2|4,2,1)$. The partition $(9,7,5,5,1)$, in light red in Figure~\ref{fig.complementary}, has Frobenius coordinates $(8,5,2,1|4,2,1,0)$. The partition $(1)$ would have Frobenius coordinates $(0|0)$ and the empty partition's Frobenius coordinates are $(|)$. 

If $\lambda $ has Frobenius coordinates $(p_1,\cdots, p_d|q_1,\cdots,q_d)$, define $x(\lambda) := (p_i+1/2: 1 \le i \le d)$ and $y(\lambda) := (q_i+1/2: 1 \le i \le d)$. The pair $(x(\lambda),y(\lambda))$ is then the \emph{shifted Frobenius coordinates}. Note that $\sum_{x \in x(\lambda)} x + \sum_{y \in y(\lambda)} y = |\lambda|$. 

\subsubsection{Definition of Frobenius-Schur functions}  Let $\mu, \nu$ be partitions, with $m:= |\mu|, n := |\nu|$. The purpose of the Frobenius-Schur functions is precisely to compute $\dim(\lambda,\mu)$. In fact, this problem serves as their definition, by interpolation of the following formula:
\begin{equation}
\frac{\dim(\mu,\nu)}{\dim(\phi,\nu)} = \frac{F\syms{\mu}{x(\nu);y(\nu)}}{n (n-1)\cdots (n-m+1)}.
\end{equation}

We caution the reader that $F\symssymb{\lambda}$ is in general not homogeneous. However it does agree with $\symssymb{\lambda}$ for its terms of top total degree.

\subsubsection{Giambelli formula} Given two integers $a$, $l$, define the \emph{hook partition} of \emph{arm length} $a$ and \emph{leg length} $l$ as the partition with Frobenius coordinates $(a|l)$. By abuse of notation, we also denote the partition itself by $(a|l)$ then.

The Giambelli formula, proved for Frobenius-Schur functions in \cite[an equation between (2.4) and (2.5)]{FS}, is a formula that gives an algebraic expressions for Frobenius-Schur functions in terms of Frobenius-Schur functions of hook partitions.  Let $\lambda$ be a partition with Frobenius dimension $d=d(\lambda)$ and Frobenius  coordinates $(p_1, \cdots, p_d|q_1, \cdots , q_d)$. Then, 
\begin{equation}
\label{eqn.giambelli}
F\syms{\lambda}{\AAA;\BBB} =\det \left[   F\syms{{(p_i|q_j)}}{\AAA;\BBB}  \right]_{i,j=1}^{d}.
\end{equation}

We still need to know how to compute Frobenius-Schur functions of hook partitions.
\subsubsection{Frobenius-Schur functions for hook partitions}
We simply quote \cite[Proposition 2.4]{FS}:
\begin{equation}
\label{eqn.forhook}
F\syms{(p|q)}{\AAA;\BBB} = \sum_{p'=0}^p \sum_{q'=0}^q c_{pp'}c_{qq'} \syms{(p'|q')}{\AAA;\BBB},
\end{equation}
where
\begin{equation}
c_{pp'} = 
\begin{cases}
(-1)^{p-p'} \syme{p-p'}{\frac{1}{2},\frac{3}{2},\cdots,\frac{2p-1}{2}},&p' \le p;\\
0, &p'>p.
\end{cases}
\end{equation}
The notation $\syme{r}{\AAA}$ is a notational shortcut for  $\syms{\left(1^r\right)}{\AAA}$ (these functions are also part of the  \emph{elementary symmetric polynomials} family). Note that $c_{pp}=1$.
\subsection{The proof itself}
We are now ready to prove Theorem~\ref{thm.dim}.
\begin{proof}[ of Theorem~\ref{thm.dim}]
\label{proof.thm.dim}
We know from the results on Frobenius-Schur functions that:
\begin{equation}
\label{eqn.double}
\frac{\dim(\kappa,\complementary{k}{\lambda})}{\dim(\phi,\complementary{k}{\lambda})} = \frac{F\syms{\kappa}{x(\complementary{k}{\lambda});y(\complementary{k}{\lambda})}}{(k^2-|\lambda|)\cdots(k^2-|\kappa|-|\lambda|+1)}.
\end{equation}
By symmetry considerations, we also have
\begin{equation}
\dim(\phi,\complementary{k}{\lambda}) = \dim(\lambda,\complementary{k}{\phi}) = \dim(\lambda,(k^k)),
\end{equation}
as the truncation of the Young lattice to partitions fitting inside a $k\times k$ rectangle exhibits additional symmetry.

We use Frobenius-Schur functions again, and obtain
\begin{equation}
\label{eqn.single}
\frac{\dim(\lambda,(k^k))}{\dim(\phi,(k^k))} = \frac{F\syms{\lambda}
{x((k^k));y((k^k))}
}{k^2\cdot(k^2-1) \cdots (k^2-|\lambda|+1)}.
\end{equation}
Therefore, we obtain
\begin{equation}
\dim(\kappa,\complementary{k}{\lambda}) = \frac{\dim(\phi,(k^k)) F\syms{\kappa}{x(\complementary{k}{\lambda});y(\complementary{k}{\lambda})}F\syms{\lambda}
{x(\complementary{k}{\phi});y(\complementary{k}{\phi})}
}{k^2\cdot(k^2-1)\cdots(k^2-|\kappa|-|\lambda|+1)} .
\end{equation}
We will show in the proof of Theorem~\ref{thm.dim}, around Equation~\eqref{eqn.hookformula}, that $ \dim(\phi,(k^k)) = g_k$. Therefore, we are done if we can show the general statement that 
\begin{equation*}
F\syms{\mu}{x(\complementary{k}{\nu});y(\complementary{k}{\nu})}
\end{equation*}
is a polynomial in $k$ of degree at most $2|\mu|$. By the Giambelli formula, it is sufficient to prove this for $\mu$ a hook partition. By Equation~\eqref{eqn.forhook}, the problem is further reduced to supersymmetric Schur polynomials. Finally, by the transition rule \eqref{eqn.transitionschur} (and multiplicativity of the power polynomials),  we only need to prove polynomiality in $k$ for
\begin{equation*}
\symp{r}{x(\complementary{k}{\nu});y(\complementary{k}{\nu})},
\end{equation*}
together with a bound on the degree of the polynomial in $k$ of $2r$ (under the hypothesis $r \ge 1$). 

It is easy to see that there exist two fixed, finite sets $I_x$ and $I_y$ such that
\begin{align*}
x(\complementary{k}{\nu}) &= I_x \, \Delta  \, x((k^k)) \\
y(\complementary{k}{\nu}) &= I_y \, \Delta  \, y((k^k)) 
\end{align*}
for all $k$ (with $\Delta$ symmetric difference of sets).  Loosely said, up to a few initial fixes, the Frobenius coordinates for $\complementary{k}{\nu}$ and the full $k \times k$ square are the same for each $k$. This implies
\begin{equation*}
\symp{r}{x(\complementary{k}{\nu});y(\complementary{k}{\nu})} = \symp{r}{x((k^k));y((k^k))} - \symp{r}{I_x;I_y},
\end{equation*}
which is valid for all $k$. It becomes clear, since the last term in the RHS is a constant, that we only need to concern ourselves with the first term in the RHS. This term is much simpler, and can actually be explicitly computed:
\begin{align*}
\symp{r}{x((k^k));y((k^k))} &= \symp{r}{k-\frac12,\cdots,\frac12;k-\frac12,\cdots,\frac12}\\
&= \begin{cases}
2 \sum_{i=0}^{k-1} (i+\frac12)^r, & r\text{ is odd;}\\
0, & r\text{ is even.}
\end{cases}
\end{align*}
Since $\sum_{i=0}^{k-1} (i+\frac12)^r$ is a polynomial in $k$ (by Faulhaber's formula) of degree $r+1 \le 2 r$ (since we have $r \ge 1$),  we are done.
\end{proof}

\begin{remark*} This proof unfortunately breaks some of the symmetry of the statement of the theorem, giving a different r\^ole to $\kappa$ and $\lambda$. This is due to the current state of the theory on Frobenius-Schur functions. Careful observation shows that we have implicitly proved along the way that
\begin{align*}
F\syms{\kappa}{x(\complementary{k}{\lambda});y(\complementary{k}{\lambda})}F\syms{\lambda}
{x((k^k));y((k^k))} = \\
F\syms{\lambda}{x(\complementary{k}{\kappa});y(\complementary{k}{\kappa})}F\syms{\kappa}
{x((k^k));y((k^k))}.
\end{align*}
In fact, one should expect a factorization for the final formula for  $\dim(\kappa,\complementary{k}{\lambda})$, that would give a symmetric r\^ole to $\kappa$ and $\lambda$.
\end{remark*}

\section{Corollaries of the Main Theorem}
\label{sec.corollaries}
\subsection{Leading term}
\label{sec.leading}
We had actually left out one part of the proof of Theorem~\ref{thm.main}, the proof that $ g_k =  \dim(\phi,(k^k))$. Indeed, we want our computations to be completely independent of previous work of Keating and Snaith for instance.  In particular, we want to compute the leading coefficient of $P_k$ using only Theorem~\ref{thm.main}. We feel that interesting ideas come from this proof.
\begin{prop}[(see \cite{CFKRS2})]
\label{prop.leading}
The leading term $c_0(k)$ of $P_k$ is given by 
\begin{equation}
c_0(k) = \frac{ a_k g_k }{(k^2)!}
\end{equation}
with $g_k$ also given by $ \frac{k^2!}{\prod_{i,j=0}^{k-1} (i+j+1)}  .$
\end{prop}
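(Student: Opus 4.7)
}

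The plan is to specialize Theorem~\ref{thm.main} to $N=0$ and then evaluate the two resulting factors independently. With $N=0$ the condition $|\kappa|+|\lambda|=0$ forces $\kappa=\lambda=\phi$, so the sum in Equation~\eqref{eqn.main} collapses to a single term and we obtain
\begin{equation*}
c_0(k) \;=\; \frac{1}{(k^2)!}\, d_{\phi\phi}\,\dim\!\bigl(\phi,\complementary{k}{\phi}\bigr)
\;=\; \frac{1}{(k^2)!}\, d_{\phi\phi}\,\dim\!\bigl(\phi,(k^k)\bigr),
\end{equation*}
since by definition $\complementary{k}{\phi}=(k^k)$. The task is therefore to identify $d_{\phi\phi}$ with $a_k$ and $\dim(\phi,(k^k))$ with the claimed rational expression for $g_k$.

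For the combinatorial factor, $\dim(\phi,(k^k))$ counts paths in the Young lattice from $\phi$ to $(k^k)$, which by the Section~\ref{sec.dimension} convention is just the number of standard Young tableaux of shape $(k^k)$. The hook length formula immediately gives
\begin{equation*}
\dim(\phi,(k^k)) \;=\; \frac{(k^2)!}{\prod_{(i,j)\in (k^k)} h(i,j)} \;=\; \frac{(k^2)!}{\prod_{i,j=0}^{k-1}(i+j+1)},
\end{equation*}
since the hook length at the cell in row $i$ and column $j$ of the $k\times k$ square (with $1\le i,j\le k$) equals $(k-i)+(k-j)+1$, and reindexing yields exactly the stated product. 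This gives the asserted closed form for $g_k$, with no appeal to prior results of Keating--Snaith.

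For the arithmetic factor, the remark following Proposition~\ref{prop.expansion} asserts $d_{\phi\phi}=\exp(W_{\phi\phi})$, so it suffices to trace back through the derivation leading to Equation~\eqref{eqn.ak} \emph{before} separating off the resolvent. The intermediate formula
\begin{equation*}
\prod_{Q}\Bigl(\sum_{u\ge 0}\symh{u}{Q^{\AAA}}\,\symh{u}{Q^{\negBBB}}\,Q^{u}\Bigr)
\end{equation*}
must now be evaluated with both alphabets $\AAA$ and $\BBB$ collapsed to $k$ copies of $\Zero$. Then $\symh{u}{Q^{\AAA}}\to \symh{u}{\mathbf{1}^k}=\binom{u+k-1}{u}$ (a standard evaluation of the complete symmetric polynomial at $k$ copies of $1$, independent of $Q$), and similarly for $\symh{u}{Q^{\negBBB}}$, so the product becomes
\begin{equation*}
\prod_{p}\sum_{u\ge 0}\binom{u+k-1}{u}^{\!\!2}\,p^{-u}\;=\;\prod_{p}{}_2F_1\!\left(k,k;1;\tfrac{1}{p}\right).
\end{equation*}
The factor $(1-1/p)^{k^2}$ in the definition \eqref{eqn.ak} of $a_k$ arises from the limit of the resolvent $\Delta(\AAA;\BBB)$ stripped off in the passage from Equation~\eqref{eqn.ready} to \eqref{eqn.ak}; indeed, the $\mathfrak{p}$-adic local factor $\prod_{\alpha,\beta}(1-p^{-1+\alpha-\beta})$ degenerates to $(1-1/p)^{k^2}$ as $\AAA,\BBB\to k\cdot\Zero$, and this exactly accounts for what was absorbed into the renormalization defining $\tilde f$.

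The main obstacle is the bookkeeping in this last step: it requires verifying that the symmetric function identity $\sum_{u}\binom{u+k-1}{u}^2 q^u={}_2F_1(k,k;1;q)$ reproduces the local Euler factor of $a_k$ after the correct compensation by the resolvent, and that no hidden divergences remain in the $\AAA,\BBB\to k\cdot\Zero$ limit of $\log \tilde f$ at the $\phi\phi$ coefficient (the Stieltjes cumulants encoded in $\coeff{1}{0}$ must cancel precisely against the $-\log s$ singularity). Once this matching is checked, combining the two computations yields $c_0(k)=a_k g_k/(k^2)!$ as claimed.
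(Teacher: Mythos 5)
Your treatment of the combinatorial factor is correct and identical to the paper's: reduce $c_0(k)$ to the single $\kappa=\lambda=\phi$ term, note $\complementary{k}{\phi}=(k^k)$, and apply the hook length formula to the $k\times k$ square. No issues there.

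The arithmetic factor is where the proposal has a genuine gap. You write $\tilde f(\AAA;\BBB)\to\prod_p {}_2F_1(k,k;1;1/p)$ as $\AAA,\BBB\to k\cdot\Zero$ and then assert that the $(1-1/p)^{k^2}$ compensating factor ``arises from the limit of the resolvent $\Delta(\AAA;\BBB)$''. But $\prod_p {}_2F_1(k,k;1;1/p)$ is a divergent product (each factor is $1+k^2/p+O(1/p^2)$), and $\Delta(\AAA;\BBB)=\prod(\alpha-\beta)$ simply tends to $0$; it cannot by itself produce the Euler product $\prod_p(1-1/p)^{k^2}$. The CFKRS decomposition you are implicitly invoking is $\prod_p B_p=\prod_{\alpha,\beta}\zeta(1+\alpha-\beta)\cdot\prod_p A_p B_p$ with $A_p=\prod_{\alpha,\beta}(1-p^{-1+\alpha-\beta})$; the resolvent lives inside $\prod\zeta(1+\alpha-\beta)=\Delta^{-1}\prod[(\alpha-\beta)\zeta(1+\alpha-\beta)]$, and it is the \emph{regular part} $\prod[(\alpha-\beta)\zeta(1+\alpha-\beta)]\to 1$, not the resolvent itself, that does the compensating; the correction factor $\prod_p A_p$ remains inside $\tilde f$, it is not stripped off. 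You flag this bookkeeping as ``the main obstacle'' and then precisely skip it. (Your parenthetical about Stieltjes cumulants cancelling against $-\log s$ is also off: in the paper the $-\log s$ gives the resolvent, and $\coeff{1}{0}=B_1-\gamma_0$ is the Mertens-type constant that feeds the Euler factors via M\"obius inversion, not the Stieltjes cumulants.)

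The paper avoids this divergence altogether: it never forms $\prod_p {}_2F_1$ as an intermediate object. Instead it expands $W_{\phi\phi}$ using $\coeff{1}{0}=\sum_{n\ge 2}\mu(n)\log\zeta(n)/n$, M\"obius-inverts, and regroups so that \emph{inside each summand over $Q=1/p$} the $Q^1$ terms cancel and the $Q^0$ term is absent, yielding $\sum_Q\{k^2\log(1-Q)+\log{}_2F_1(k,k;1;Q)\}$ with each $Q$-summand $O(Q^2)$, hence absolutely convergent before ever writing a product over primes. That regrouping is the actual content of the arithmetic-factor proof, and it is missing from your sketch. So the combinatorial half stands, but the arithmetic half as written is not a proof.
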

\begin{proof}
For the leading term of $P_k$, the main sum of Theorem~\ref{thm.main} is reduced to just  one term, corresponding to $\kappa=\lambda= \phi$. Exploiting that $\complementary{k}{\phi}$ is the full $k\times k$ square $(k^k)$, we have
\begin{equation}
c_0(k) =  d_{\phi\phi} \frac{\dim(\phi,(k^k))}{(k^2)!}.
\end{equation}
We will show that $\dim(\phi,(k^k))=g_k$  and that $d_{\phi\phi} = a_k$.

\textbf{The combinatorial/RMT factor $g_k$. }
The new definition given for $g_k$ in the statement of the proposition is trivially equivalent to the original definition of Equation~\eqref{eqn.gk}. 

The proof that  $\dim(\phi, (k^k)) = \frac{k^2!}{\prod_{i,j=0}^{k-1} (i+j+1)} $ will amount to the hook length formula \cite{FRT}, which gives the $\dim(\phi,\lambda)$ for a partition $\lambda$: 
\begin{equation}
\dim(\phi,\lambda) = \frac{|\lambda|!}{H(\lambda)},
\label{eqn.hookformula}
\end{equation}
with $H(\lambda) = \prod_{\square \in \lambda} h(\square)$ and $h(\square)$ the length of the hook based at the box $\square$ of $\lambda$. 

For a $k\times k$ square partition, the product of hook lengths is given by $\prod_{i,j=0}^{k-1} (i+j+1)$, so we are done.  

Equivalently, we are computing here the number of Young tableaux of shape $k \times k$. This identity between $g_k$ and the number of such tableaux was already known (see OEIS A039622 \cite{A039622}), but obscured by the connection with random matrices. It is however more natural, as it shows the integrality property of the $g_k$, and does not require an extraneous limit in $N$, as in the definition of Equation~\eqref{eqn.gk}.

\textbf{The arithmetic factor $a_k$. } 
We want to show
\begin{equation}
\label{prop.ak}
\prod_p \left(1-\frac{1}{p}\right)^{k^2} {}_2F_1\left(k,k;1;\frac{1}{p}\right) =: a_k  = \exp\left(W_{\phi\phi}  \right) = d_{\phi\phi} 
\end{equation}
The first equality is a reminder of the definition of $a_k$. The last equality is a consequence of Equation~\eqref{eqn.expansion}. We thus only need to prove the remaining equality, which amounts to $\log a_k = W_{\phi\phi} $. 

We start with Equation~\eqref{eqn.Wmunu}, and obtain
\begin{align}
 W_{\phi\phi} \nonumber
&= \sum_{r \ge 1} \VVV{r}{\phi\phi}   \coeff{r}{0}\nonumber\\
& =\sum_{r \ge 1} \left(\sum_{\kappa,\lambda\vdash r}  f_{\kappa \lambda}   k^{l(\kappa)+l(\lambda)}\right) \coeff{r}{0}  \quad\quad  \text{(by Eqn.~\eqref{eqn.Vrmunu})}\nonumber\\
&= f_{(1)(1)} \coeff{1}{0} k^2 +  \sum_{r \ge 2} \sum_{\kappa,\lambda\vdash r}  f_{\kappa \lambda} \coeff{r}{0} k^{l(\kappa)+l(\lambda)}\nonumber\\
\label{sec.step1}
&=f_{(1)(1)}  k^2  \sum_{n\ge 2} \frac{\mu(n)}{n}  \log \zeta(n) +  \sum_{r \ge 2} \sum_Q \ \left(Q^r \sum_{\kappa,\lambda\vdash r }  f_{\kappa \lambda}   \symp{\kappa}{k\cdot\One}  \symp{\lambda}{k\cdot\One}\right) \\
&=- k^2  \sum_Q \sum_{n\ge 2} \frac{\mu(n)}{n}  \log (1-Q^n) +   \sum_Q \sum_{r \ge 2} \left(Q^r \sum_{\kappa,\lambda\vdash r }  f_{\kappa \lambda}   \symp{\kappa}{k\cdot\One}  \symp{\lambda}{k\cdot\One}\right) \label{sec.step2}\\
&=   \sum_Q \left\{ \left(- k^2 \sum_{n\ge 2} \frac{Q^{n}}{n}\right) -k^2 Q+   \sum_{r \ge 1} \left(Q^r \sum_{\kappa,\lambda\vdash r }  f_{\kappa \lambda}   \symp{\kappa}{k\cdot\One}  \symp{\lambda}{k\cdot\One}\right) \right\} \label{sec.step3}\\
&= \sum_Q  \left\{  k^2  \left(-\sum_{n\ge 1} \frac{Q^{n}}{n}\right) +   \log \left(\sum_{u \ge 0 }\symh{u}{k\cdot\One}\symh{u}{k\cdot\One}Q^u  \right) \right\}. \label{sec.step4}
\end{align}

Expression~\eqref{sec.step1} follows from the definition of the $\coeff{r}{n}$ and the definition of each  $\symp{\kappa_i}{k\cdot \One}$. Expression~\eqref{sec.step2} rearranges absolutely convergent sums. 
 Expression~\eqref{sec.step3} follows from M\"obius inversion, and  adds/removes the term $-k^2 Q$. 
We go back through  the definition of the $f_{\kappa\lambda}$ to obtain Expression~\eqref{sec.step4}.

We are now done, since $\symh{u}{k\cdot\One} = \binom{u+k-1}{k-1}$.
\end{proof}

\begin{remark*} Conceptually, this proof is quite simple: we want the constant coefficient in $\AAA$ and $\BBB$, which means that we will replace $\AAA$ (resp.~$\BBB$) by the alphabet $|\AAA| \cdot \Zero$, satisfying the properties $\symp{\kappa}{|\AAA| \cdot \Zero} =0$ when $|\kappa| >0$,  $\symp{\phi}{|\AAA| \cdot \Zero} =1$ and $\symp{0}{|\AAA| \cdot \Zero} =|\AAA|$ (these values have already been used to obtain  Equation~\eqref{eqn.Vrmunu}). We simply have to be careful (just as CFKRS), since some of the infinite products might be divergent if handled inappropriately. In the proof, each of the sums over $Q$ converges absolutely, as none of the summands have a remaining $Q^0$ or $Q^1$ term (in Expressions~\eqref{sec.step3} and \eqref{sec.step4}, there is cancellation in the $Q^1$ term and no $Q^0$ term).
\end{remark*}

\subsection{Uniform convergence of coefficients}
\label{sec.uniform}
In \cite{Rub1}, Rubinstein and Hiary provide uniform asymptotics for roughly the first $k$ coefficients of $P_k$. This relies on intricate and separate bounds for the arithmetic and combinatorial components.

Our method does not bring in significant information for the arithmetic factor.

 On the other hand, the direct combinatorial interpretation we provide in this paper might be useful to obtain bounds on the combinatorial factor. In particular, Equation~\eqref{eqn.double}, which reduces up to a polynomial the computation of $\dim(\kappa,\complementary{k}{\lambda}) $ to the computation of $\dim(\phi,\complementary{k}{\lambda})$,  is reminescent of Lemma 3.3 of \cite{Rub1}, which reduces up to a polynomial the computation of their $p_k(\alpha_1,\cdots,\alpha_k;\alpha_{k+1},\cdots,\alpha_{2k})$ to the computation of their $p_k(\lambda;0)$. They then translate the computation of the $p_k(\lambda;0)$ into computation of the polynomials $N_k^0(\lambda)$, via a recurrence formula. On our side, we do not need such a recurrence formula, as the theory of Frobenius-Schur functions provide more direct ways to compute the  $\dim(\phi,\complementary{k}{\lambda})$. Indeed, the way we presented in Equation~\eqref{eqn.giambelli} only requires computing a $d\times d$ determinant, where $d = d(\lambda)$ is the Frobenius dimension (a quantity that grows at most like $\sqrt{|\lambda|}$). 
 
This is significant for the results of \cite{Rub1}, as the main theorem there only concerns the roughly first $k$ coefficients:  for a fixed $\alpha <1$, the asymptotics in $k$ for $c_r(k)$ are uniform in the range $0 \le r <k^\alpha$. These asymptotics depend on three bounds in \cite{Rub1}: Equations~(50), (51) and (52). Equation (50) is the only one with this type of restriction on the range of $r$. The other two involve the arithmetic factor.  It  our point of view, it is easily understood why such a restriction might appear: when $|\kappa|+|\lambda| < k $, we know that $\dim(\kappa,\complementary{k}{\lambda})$ will clearly be nonzero, for instance, as $\kappa \subset \complementary{k}{\lambda}$ then. When $|\kappa|+|\lambda| \sim k^\alpha $ for $\alpha >1$, the situation becomes more tricky as the partitions $\kappa$ and $\lambda^t$ fitted inside a $k\times k$ square might start overlapping. 

\section{(Pre)computation of the Constants}
\label{sec.precomputation}
The structure of Theorem~\ref{thm.main} is such that it relies on several families of constants: the $\coeff{r}{n}$, $f_{\kappa\lambda}$, $\VVV{r}{\mu\nu}$,  $W_{\mu\nu}$, $d_{\mu \nu}$, and $\dim(\kappa,\lambda)$. 
We explain how to compute the first five families in this section, as we have already explained in Section~\ref{sec.thm3} how to compute $\dim(\kappa,\lambda)$.

\subsection{Precomputation of  the $\coeff{r}{n}$}
\note{Discuss computations with prime zeta, theoretical results on decrease of the coefficients, and give table}
\label{sec.cnr}
These were defined in \eqref{eqn.coeffrn}, page~\pageref{eqn.coeffrn}. 

Their computation only relies on Equation~\eqref{eqn.moebius}, page~\pageref{eqn.moebius}, which is well-known and old. The computation itself involves no new technique, but does not seem to appear explicitly in the literature. 

By taking successive formal derivations of Equation~\eqref{eqn.coeffrn} using the quotient rule, we can compute $\coeff{r}{N}$ as a series over the squarefree integers $q$ of a (fixed) rational function in the $\zeta^{(n)}(qr)$, with $n \le N$ (if $N$ equals 0, we actually have to take $\log(\zeta(qr))$ instead). If $r=1$, when $q=1$, the multiple $1\cdot 1$ has to be ``corrected'' according to Equation~\eqref{eqn.coeffr1} to remove the singularity.

The main point here is that $\zeta^{(n)}(qr)$ will be easily computable by Euler-Maclaurin summation, and that the terms in all the series involved are decreasing exponentially fast to 0, so their truncations provide very good approximations. Additional tricks can also used to improve convergence, as explained in \cite{Cohen}.
\subsection{Precomputation of the $f_{\kappa\lambda}$}
\label{sec.fkl}
It is clear from their definition in Equation~\eqref{eqn.fkl} that the $f_{\kappa\lambda}$ present the  symmetry $f_{\kappa\lambda} = f_{\lambda\kappa}$, with the additional restriction that $f_{\kappa\lambda} =0$ if $|\kappa| \ne |\lambda|$. 

We present some actual values in Table~\ref{table.fkl}. These were computed using \textsf{sage} \cite{sage} to perform computations in the lazy power series ring over the tensor product of the symmetric polynomials algebra $\Lambda$ with itself. \textsf{Sage} is helpful as it includes all the information necessary to transition between bases of $\Lambda$ and therefore also $\Lambda(\AAA) \otimes \Lambda(\BBB)$.
\begin{table}[tph]
\[
\begin{array}{c||c|cc|ccc}
\kappa,\lambda&{\tiny \yng(1)}&{\tiny \yng(2)}&{\tiny \yng(1,1)}&{\tiny \yng(1,1,1)}&{\tiny \yng(2,1)}&{\tiny \yng(3)}\\
\hline
\hline
{\tiny \yng(1)}&1&&&&&\\[10pt]
\hline
{\tiny \yng(2)}&&\frac14&\frac14&&\\[10pt]
{\tiny \yng(1,1)}&&\frac14&-\frac14&&\\[10pt]
\hline
{\tiny \yng(1,1,1)}&&& &\frac19&-\frac16&\frac{1}{18}\\[10pt]
{\tiny \yng(2,1)}&&&&-\frac16&&\frac16\\[10pt]
{\tiny \yng(3)}&&&&\frac{1}{18}&\frac16&\frac19
\end{array}
\]
\caption{The values of $f_{\kappa\lambda}$, with empty entries of value 0, for small partition sizes.\label{table.fkl}}
\end{table}

\begin{table}[tph]
\[
\begin{array}{c||ccccc}
&{\tiny{\yng(4)}}&{\tiny{\yng(3,1)}}&{\tiny{\yng(2,2)}}&{\tiny{\yng(2,1,1)}}&{\tiny{\yng(1,1,1,1)}}\\
\hline\hline
{\tiny{\yng(4)}}&\frac{1}{16}&\frac{1}{12}&\frac{1}{32}&\frac{1}{16}&\frac{1}{96}\\[10pt]
{\tiny{\yng(3,1)}}&\frac{1}{12}&0&\frac{1}{24}&-\frac{1}{12}&-\frac{1}{24}\\[10pt]
{\tiny{\yng(2,2)}}&\frac{1}{32}&\frac{1}{24}&-\frac{1}{64}&-\frac{1}{32}&-\frac{5}{192}\\[10pt]
{\tiny{\yng(2,1,1)}}&\frac{1}{16}&-\frac{1}{12}&-\frac{1}{32}&-\frac{1}{16}&\frac{11}{96}\\[10pt]
{\tiny{\yng(1,1,1,1)}}&\frac{1}{96}&-\frac{1}{24}&-\frac{5}{192}&\frac{11}{96}&-\frac{11}{192}
\end{array}
\]
\caption{The values of $f_{\kappa\lambda}$, for partitions of  size 4.\label{table.fkl4}}
\end{table}

\begin{table}[tph]
\[
\begin{array}{c||ccccccc}
&{\tiny{\yng(5)}}&{\tiny{\yng(4,1)}}&{\tiny{\yng(3,2)}}&{\tiny{\yng(3,1,1)}}&{\tiny{\yng(2,2,1)}}&{\tiny{\yng(2,1,1,1)}}&{\tiny{\yng(1,1,1,1,1)}}\\
\hline\hline
{\tiny{\yng(5)}}&\frac{1}{25}&\frac{1}{20}&\frac{1}{30}&\frac{1}{30}&\frac{1}{40}&\frac{1}{60}&\frac{1}{600}\\[10pt]
{\tiny{\yng(4,1)}}&\frac{1}{20}&0&\frac{1}{24}&-\frac{1}{24}&0&-\frac{1}{24}&-\frac{1}{120}\\[10pt]
{\tiny{\yng(3,2)}}&\frac{1}{30}&\frac{1}{24}&0&0&-\frac{1}{48}&-\frac{1}{24}&-\frac{1}{80}\\[10pt]
{\tiny{\yng(3,1,1)}}&\frac{1}{30}&-\frac{1}{24}&0&0&-\frac{1}{16}&\frac{1}{24}&\frac{7}{240}\\[10pt]
{\tiny{\yng(2,2,1)}}&\frac{1}{40}&0&-\frac{1}{48}&-\frac{1}{16}&0&\frac{1}{48}&\frac{3}{80}\\[10pt]
{\tiny{\yng(2,1,1,1)}}&\frac{1}{60}&-\frac{1}{24}&-\frac{1}{24}&\frac{1}{24}&\frac{1}{48}&\frac{1}{12}&-\frac{19}{240}\\[10pt]
{\tiny{\yng(1,1,1,1,1)}}&\frac{1}{600}&-\frac{1}{120}&-\frac{1}{80}&\frac{7}{240}&\frac{3}{80}&-\frac{19}{240}&\frac{19}{600}
\end{array}\]
\caption{The values of $f_{\kappa\lambda}$, for partitions of  size 5.\label{table.fkl5}}
\end{table}

\begin{table}[tph]
\[
\begin{array}{c||ccccccccccc}
&({6}^{})&({5}^{},{1}^{})&({4}^{},{2}^{})&({4}^{},{1}^{2})&({3}^{2})&({3}^{},{2}^{},{1}^{})&({3}^{},{1}^{3})&({2}^{3})&({2}^{2},{1}^{2})&({2}^{},{1}^{4})&({1}^{6})\\
\hline\hline
({6}^{})&\frac{1}{36}&\frac{1}{30}&\frac{1}{48}&\frac{1}{48}&\frac{1}{108}&\frac{1}{36}&\frac{1}{108}&\frac{1}{288}&\frac{1}{96}&\frac{1}{288}&\frac{1}{4320}\\[10pt]
({5}^{},{1}^{})&\frac{1}{30}&0&\frac{1}{40}&-\frac{1}{40}&\frac{1}{90}&0&-\frac{1}{45}&\frac{1}{240}&-\frac{1}{80}&-\frac{1}{80}&-\frac{1}{720}\\[10pt]
({4}^{},{2}^{})&\frac{1}{48}&\frac{1}{40}&0&0&\frac{1}{144}&0&-\frac{1}{72}&-\frac{1}{192}&-\frac{1}{64}&-\frac{1}{64}&-\frac{7}{2880}\\[10pt]
({4}^{},{1}^{2})&\frac{1}{48}&-\frac{1}{40}&0&0&\frac{1}{144}&-\frac{1}{24}&\frac{1}{36}&-\frac{1}{192}&-\frac{1}{64}&\frac{5}{192}&\frac{17}{2880}\\[10pt]
({3}^{2})&\frac{1}{108}&\frac{1}{90}&\frac{1}{144}&\frac{1}{144}&-\frac{1}{324}&-\frac{1}{108}&-\frac{1}{324}&\frac{1}{864}&-\frac{1}{96}&-\frac{7}{864}&-\frac{19}{12960}\\[10pt]
({3}^{},{2}^{},{1}^{})&\frac{1}{36}&0&0&-\frac{1}{24}&-\frac{1}{108}&-\frac{1}{36}&-\frac{1}{108}&-\frac{1}{144}&0&\frac{7}{144}&\frac{1}{54}\\[10pt]
({3}^{},{1}^{3})&\frac{1}{108}&-\frac{1}{45}&-\frac{1}{72}&\frac{1}{36}&-\frac{1}{324}&-\frac{1}{108}&-\frac{1}{324}&-\frac{1}{108}&\frac{1}{16}&-\frac{1}{54}&-\frac{131}{6480}\\[10pt]
({2}^{3})&\frac{1}{288}&\frac{1}{240}&-\frac{1}{192}&-\frac{1}{192}&\frac{1}{864}&-\frac{1}{144}&-\frac{1}{108}&\frac{1}{576}&\frac{1}{192}&\frac{1}{144}&\frac{17}{4320}\\[10pt]
({2}^{2},{1}^{2})&\frac{1}{96}&-\frac{1}{80}&-\frac{1}{64}&-\frac{1}{64}&-\frac{1}{96}&0&\frac{1}{16}&\frac{1}{192}&\frac{1}{64}&0&-\frac{19}{480}\\[10pt]
({2}^{},{1}^{4})&\frac{1}{288}&-\frac{1}{80}&-\frac{1}{64}&\frac{5}{192}&-\frac{7}{864}&\frac{7}{144}&-\frac{1}{54}&\frac{1}{144}&0&-\frac{49}{576}&\frac{473}{8640}\\[10pt]
({1}^{6})&\frac{1}{4320}&-\frac{1}{720}&-\frac{7}{2880}&\frac{17}{2880}&-\frac{19}{12960}&\frac{1}{54}&-\frac{131}{6480}&\frac{17}{4320}&-\frac{19}{480}&\frac{473}{8640}&-\frac{473}{25920}
\end{array}\]
\caption{The values of $f_{\kappa\lambda}$, for partitions of  size 6.\label{table.fkl6}}
\end{table}

\note{For $|\kappa|=|\lambda|\le3$, the only nonzero terms $f_{\kappa\lambda}$ are
\begin{multline}
f_{(1)(1)} = 1; f_{(1,1)(1,1)} = -\frac14, f_{(2)(1,1)} = f_{(1,1)(2)} = \frac14,  f_{(2)(2)}=\frac14;\\
f_{(1,1,1)(1,1,1)} = \frac19, f_{(1,1,1)(2,1)}=f_{(2,1)(1,1,1)}=-\frac16, f_{(3)(1,1,1)}=f_{(1,1,1)(3)} = \frac{1}{18},\\
 f_{(3)(2,1)}=f_{(2,1)(3)}=\frac16,f_{(3)(3)}=\frac19.
\end{multline}
\note{Include table}
}
\subsection{Precomputation of  the $\VVV{r}{\mu\nu}$}
\label{sec.vrmn}
The $\VVV{r}{\mu\nu}$ were defined in Equation~\eqref{eqn.Vrmunu}. There is nothing to say actually, except that their computation depends on the previously computed $f_{\kappa \lambda}$, and simply consists of a finite sum. In addition, we have the relation $\VVV{r}{\mu\nu} = \VVV{r}{\nu\mu}$, which follows from the same symmetry for the $f_{\mu\nu}$.
\note{Include tables for $r$ vs. total weight in $\mu,\nu$}

\subsection{Precomputation of the $W_{\mu\nu}$} We simply truncate the Equation~\eqref{eqn.Wmunu}, page~\pageref{eqn.Wmunu}, which defined the $W_{\mu\nu}$. As this involves the $\coeff{n}{r}$ for larger and larger $n$, this converges exponentially fast. We still have the symmetry $W_{\mu\nu} = W_{\nu\mu}$.

\subsection{Alternative summation for $W_{\mu\nu}$} Careful consideration of Equation~\eqref{eqn.Wmunu} shows that  $W_{\mu\nu}$ is obtained as a sum with coefficients of a fixed derivative of the prime zeta function, evaluated at successive integers. Sums of this form can be efficiently reorganized, leading to new series with much better convergence. This is explained in  \cite{Cohen}, close in spirit to parts of  \cite{CFKRS2}.

\subsection{Precomputation of the $d_{\mu\nu}$}
These were defined in Equation~\eqref{eqn.expansion}, page~\pageref{eqn.expansion}. A similar strategy as for the $f_{\kappa\lambda}$ works, except that we cannot do the computations exactly anymore: the coefficients depend on the $W_{\mu\nu}$, which can only be approximated. We thus need to change the base field of $\Lambda$ (or more accurately $\Lambda \otimes \Lambda$) from $\mathbb{Q}$ to a real field with large precision. Again, the symmetry $d_{\mu\nu} = d_{\nu\mu}$ is preserved.

\begin{appendix}
\label{appendix}
\section{Main Formulas and Definitions}
We now list some of the main formulas and definitions, together with an indication of the location where it appeared. It is hoped that this will help the reader understand the structure of the final result. 

The symbol ``$=$'' indicates a result. 
The symbol ``$:=$'' indicates an implicit definition, and the ``direction'' of this definition.
The symbol ``$\overset{?}{:=}$'' indicates an implicit definition, provided Conjecture~\ref{conj.pol} is true.
\begin{description}
\item[{Prime zeta function}]
\begin{align}
\primezeta (s) &= \sum_{n\ge 1} \frac{ \mu(n)}{n}  \log \zeta(ns) \quad \Re s > 1\refer{eqn.moebius}\\
\primezeta(r+s) &=:  \sum_{n\ge 0} \coeff{r}{n} s^n \quad r \in \mathbb{N}\setminus \{0,1\}  \refer{eqn.coeffrn}\\
\primezeta(1+s) &=:  -\log(s) + \sum_{n\ge 0} \coeff{1}{n} s^n  \refer{eqn.coeff1n}
\end{align}

\item[Transition between bases/changes of alphabets/partition invariants]
\begin{align}
z_\lambda &:= \prod_{j\ge 1}  \left(j^{M_j(\lambda)}M_j(\lambda)!\right) \refer{eqn.zlambda}\\
\symssymb{\lambda} &= \sum_{\mu \vdash |\lambda|} \frac{1}{z_\mu} \chi^{\lambda}(\mu) \sympsymb{\mu}\refer{eqn.transitionschur}\\
\sympsymb{\mu}&=\sum_{\lambda\vdash |\mu|} \chi^{\lambda}(\mu) \symssymb{\lambda}\refer{eqn.transitionpower}\\
\symp{m}{\phi_\kappa(\AAA)} & =  \sum_{\mu\vdash m } \binom{m}{\mu} \symm{\mu}{\kappa}  \symp{\mu}{\AAA} |\AAA|^{l(\kappa)-l(\mu)} \refer{eqn.pmphik}
\end{align}

\item[Final result]
\begin{align}
\int_0^T \left|\zeta\left(\frac{1}{2} + \mathfrak{i}t\right)\right|^{2k} \ud t &\overset{?}{=:} \int_0^T P_k\left(\log \frac{t}{2\pi}\right) \ud t + O\left(T^{1/2+\epsilon}\right)\refer{eqn.momentpol}\\
\refer{eqn.crk}
P_k(x) &=: c_0(k) x^{k^2} + c_1(k) x^{k^2-1} +\cdots + c_{k^2}(k)\\
c_N(k) &= \frac{1}{(k^2-N)!} \sum_{\substack{\kappa,\lambda \\ |\kappa|+|\lambda|=N}}  d_{\kappa\lambda} \dim(\lambda, \complementary{k}{\kappa})
\refer{eqn.main}
\end{align}

\item[Dimension]
\begin{align}
\refer{eqn.dim}
\dim(\lambda, \complementary{k}{\kappa})& = \frac{g_k B(k)}{k^2\cdot (k^2-1)\cdots(k^2-|\kappa|-|\lambda|+1)}, \quad B(k) \in \mathbb{Q}(k)
\end{align}

\item[(Expected-to-be-)transcendental coefficients]
\begin{align}
  \sum_{\kappa,\lambda} d_{\kappa\lambda} \syms{\kappa}{\XXX} \syms{\lambda}{\YYY} &:=  \exp \left( \sum_{\mu,\nu} W_{\mu\nu} \symp{\mu}{\XXX}\symp{\nu}{\YYY}\right)
  \refer{eqn.expansion}
  \end{align}
 \begin{align}
W_{\mu\nu} &:= \sum_{r \ge 1} \VVV{r}{\mu\nu}   \coeff{r}{|\mu|+|\nu|}
\refer{eqn.Wmunu}
\\
\VVV{r}{\mu\nu} & :=\binom{|\mu|+|\nu|}{\mu\cup \nu} \sum_{\kappa,\lambda\vdash r}  f_{\kappa \lambda}   \symm{\mu}{\kappa}  \symm{\nu}{\lambda} k^{l(\kappa)+l(\lambda)-l(\mu)-l(\nu)}
\refer{eqn.Vrmunu}
\end{align}
\begin{align}
\refer{eqn.fkl}
 \sum_{\substack{\kappa,\lambda\\|\kappa| = |\lambda|}} \frac{1}{z_\kappa z_\lambda} \symp{\kappa}{\XXX}  \symp{\lambda}{\YYY}  T^{|\kappa|} &=:  \exp \left( \sum_{\substack{\kappa,\lambda\\|\kappa| = |\lambda| \ge 1}} f_{\kappa \lambda} \symp{\kappa}{\XXX}  \symp{\lambda}{\YYY}  T^{|\kappa|} \right)
\end{align}
\end{description}
\end{appendix}

\bibliographystyle{halpha} 
\bibliography{references}

\newcommand{\etalchar}[1]{$^{#1}$}
\begin{thebibliography}{GHRR12}

\bibitem[A0311]{A039622}
The {O}n-{L}ine {E}ncyclopedia of {I}nteger {S}equences.
\newblock \url{http://oeis.org}, sequence A039622, 2011.

\bibitem[BB04a]{BumpBeineke1}
Jennifer Beineke and Daniel Bump.
\newblock Moments of the {R}iemann zeta function and {E}isenstein series. {I}.
\newblock {\em J. Number Theory}, 105(1):150--174, 2004.

\bibitem[BB04b]{BumpBeineke2}
Jennifer Beineke and Daniel Bump.
\newblock Moments of the {R}iemann zeta function and {E}isenstein series. {II}.
\newblock {\em J. Number Theory}, 105(1):175--191, 2004.

\bibitem[BG06]{BumpGamburd}
Daniel Bump and Alex Gamburd.
\newblock On the averages of characteristic polynomials from classical groups.
\newblock {\em Comm. Math. Phys.}, 265(1):227--274, 2006.

\bibitem[CFK{\etalchar{+}}05]{CFKRS1}
J.~B. Conrey, D.~W. Farmer, J.~P. Keating, M.~O. Rubinstein, and N.~C. Snaith.
\newblock Integral moments of {$L$}-functions.
\newblock {\em Proc. London Math. Soc. (3)}, 91(1):33--104, 2005.

\bibitem[CFK{\etalchar{+}}08]{CFKRS2}
J.~B. Conrey, D.~W. Farmer, J.~P. Keating, M.~O. Rubinstein, and N.~C. Snaith.
\newblock Lower order terms in the full moment conjecture for the {R}iemann
  zeta function.
\newblock {\em J. Number Theory}, 128(6):1516--1554, 2008.

\bibitem[CG84]{MR762188}
J.~B. Conrey and A.~Ghosh.
\newblock On mean values of the zeta-function.
\newblock {\em Mathematika}, 31(1):159--161, 1984.

\bibitem[CG98]{MR1639551}
J.~B. Conrey and A.~Ghosh.
\newblock A conjecture for the sixth power moment of the {R}iemann
  zeta-function.
\newblock {\em Internat. Math. Res. Notices}, 15:775--780, 1998.

\bibitem[CG01]{MR1828303}
J.~B. Conrey and S.~M. Gonek.
\newblock High moments of the {R}iemann zeta-function.
\newblock {\em Duke Math. J.}, 107(3):577--604, 2001.

\bibitem[Coh]{Cohen}
Henri Cohen.
\newblock High precision computation of {H}ardy-{L}ittlewood constants.

\bibitem[DGH03]{Diaconu}
Adrian Diaconu, Dorian Goldfeld, and Jeffrey Hoffstein.
\newblock Multiple {D}irichlet series and moments of zeta and {$L$}-functions.
\newblock {\em Compositio Math.}, 139(3):297--360, 2003.

\bibitem[Fr{\"o}68]{Froberg}
Carl-Erik Fr{\"o}berg.
\newblock On the prime zeta function.
\newblock {\em Nordisk Tidskr. Informationsbehandling (BIT)}, 8:187--202, 1968.

\bibitem[FRT54]{FRT}
J.~S. Frame, G.~de~B. Robinson, and R.~M. Thrall.
\newblock The hook graphs of the symmetric groups.
\newblock {\em Canadian J. Math.}, 6:316--324, 1954.

\bibitem[GHRR12]{Goulden}
Ian~P. Goulden, Duc~Khiem Huynh, Rishikesh, and Michael~O. Rubinstein.
\newblock Lower order terms for the moments of symplectic and orthogonal
  families of {${L}$}-functions.
\newblock 2012, \href{http://arxiv.org/abs/1203.4647}{arXiv:1203.4647}.

\bibitem[HO10]{Hiary}
Ghaith~A. Hiary and Andrew~M. Odlyzko.
\newblock The zeta function on the critical line: numerical evidence for
  moments and random matrix theory models.
\newblock 2010, \href{http://arxiv.org/abs/1008.2173}{arXiv:1008.2173}.

\bibitem[IM06]{MR2271611}
Aleksandar Ivi{\'c} and Yoichi Motohashi.
\newblock The moments of the {R}iemann zeta-function. {I}. {T}he fourth moment
  off the critical line.
\newblock {\em Funct. Approx. Comment. Math.}, 35:133--181, 2006.

\bibitem[KS00a]{KSLfunctions}
Jon~P. Keating and Nina~C. Snaith.
\newblock Random matrix theory and {${L}$}-functions at {$s=1/2$}.
\newblock {\em Comm. Math. Phys.}, 214(1):91--110, 2000.

\bibitem[KS00b]{KSzeta}
Jon~P. Keating and Nina~C. Snaith.
\newblock Random matrix theory and {$\zeta(1/2+it)$}.
\newblock {\em Comm. Math. Phys.}, 214(1):57--89, 2000.

\bibitem[Mac95]{Macdonald}
Ian~Grant Macdonald.
\newblock {\em Symmetric functions and {H}all polynomials}.
\newblock Oxford Mathematical Monographs. The Clarendon Press Oxford University
  Press, New York, second edition, 1995.
\newblock With contributions by A. Zelevinsky, Oxford Science Publications.

\bibitem[ORV03]{FS}
Grigori Olshanski, Amitai Regev, and Anatoly Vershik.
\newblock Frobenius-{S}chur functions.
\newblock In {\em Studies in memory of {I}ssai {S}chur ({C}hevaleret/{R}ehovot,
  2000)}, volume 210 of {\em Progr. Math.}, pages 251--299. Birkh\"auser
  Boston, Boston, MA, 2003.
\newblock With an appendix by Vladimir Ivanov.

\bibitem[RH11]{Rub1}
M.~Rubinstein and G.~Hiary.
\newblock Uniform asymptotics for the full moment conjecture of the {R}iemann
  zeta function.
\newblock 2011.
\newblock \href{http://arxiv.org/abs/1106.4352}{arXiv:1106.4352}.

\bibitem[S{\etalchar{+}}11]{sage}
W.\thinspace{}A. Stein et~al.
\newblock {\em {S}age {M}athematics {S}oftware ({V}ersion 4.7.1)}.
\newblock The Sage Development Team, 2011.
\newblock \url{ http://www.sagemath.org}.

\bibitem[Sag91]{Sagan}
Bruce~E. Sagan.
\newblock {\em The symmetric group}.
\newblock The Wadsworth \& Brooks/Cole Mathematics Series. 1991.
\newblock Representations, combinatorial algorithms, and symmetric functions.

\bibitem[Sou09]{Sound}
Kannan Soundararajan.
\newblock Moments of the {R}iemann zeta function.
\newblock {\em Ann. of Math. (2)}, 170(2):981--993, 2009.

\bibitem[Tit86]{MR882550}
E.~C. Titchmarsh.
\newblock {\em The theory of the {R}iemann zeta-function}.
\newblock The Clarendon Press Oxford University Press, New York, second
  edition, 1986.
\newblock Edited and with a preface by D. R. Heath-Brown.

\bibitem[Vor03]{VorosOriginal}
Andr{\'e} Voros.
\newblock Zeta functions for the {R}iemann zeros.
\newblock {\em Ann. Inst. Fourier (Grenoble)}, 53(3):665--699, 2003.
\newblock See also the earratum \cite{VorosErratum}.

\bibitem[Vor04]{VorosErratum}
Andr{\'e} Voros.
\newblock Erratum: ``{Z}eta functions for the {R}iemann zeros'' [{A}nn. {I}nst.
  {F}ourier ({G}renoble) {\bf 53} (2003), no. 3, 665--699; {MR}2008436].
\newblock {\em Ann. Inst. Fourier (Grenoble)}, 54(4):1139, 2004.

\bibitem[Vor10]{VorosBook}
Andr{\'e} Voros.
\newblock {\em Zeta functions over zeros of zeta functions}, volume~8 of {\em
  Lecture Notes of the Unione Matematica Italiana}.
\newblock Springer-Verlag, Berlin, 2010.
\newblock Appendix D: Hj. Mellin's ``On the zeros of the zeta function''
  translated from the German original [Ann. Acad. Sci. Fenn. A {{\bf{1}}0}
  (1917), no. 11] and annotated by Voros.

\end{thebibliography}

\end{document}